\def\C{\mathbb{C}}
\def\N{\mathbb{N}}
\def\D{\mathcal{D}}
\theoremstyle{definition}
\newtheorem{theorem}{Theorem}
\newtheorem{prop}[theorem]{Proposition}
\newtheorem{rmk}[theorem]{Remark}
\newtheorem{lem}[theorem]{Lemma}
\newtheorem{corollary}[theorem]{Corollary}
\newtheorem{definition}[theorem]{Definition}
\theoremstyle{remark}
\numberwithin{equation}{section}
\begin{document}

\title{Non Abelian Toda-type equations and matrix valued orthogonal polynomials}
\author[1]{Alfredo Deaño\thanks{alfredo.deanho@uc3m.es}}
\affil[1]{Department of Mathematics, Universidad Carlos III de Madrid, Spain}
\author[2]{Lucía Morey\thanks{lmorey@unc.edu.ar, lucia.morey@gtiit.edu.cn}}
\affil[2]{FaMAF-CIEM, Universidad Nacional de C\'ordoba (Argentina)\\ Guangdong Technion Israel Institute of Technology (China)}
\author[3]{Pablo Román\thanks{pablo.roman@unc.edu.ar, pablo.roman@gtiit.edu.cn}}
\affil[3]{FaMAF-CIEM, Universidad Nacional de C\'ordoba (Argentina)\\Guangdong Technion Israel Institute of Technology (China)}

\date{\today}

\maketitle
\begin{abstract}
In this paper, we study parameter deformations of matrix valued orthogonal polynomials (MVOPs). These deformations are built on the use of certain matrix valued operators which are symmetric with respect to the matrix valued inner product defined by the orthogonality weight. We show that the recurrence coefficients associated with these operators satisfy generalizations of the non-Abelian lattice equations. We provide a Lax pair formulation for these equations, and an example of deformed Hermite-type matrix valued polynomials is discussed in detail.
\end{abstract}

\maketitle

\section{Introduction}

The Toda lattice is a system of differential equations introduced by Morikazu Toda \cite{toda1967vibration,toda1967wave} to describe the evolution of a system of $N$ particles on the real line with an exponential interaction: namely, the equations of motion are
\begin{equation}
\ddot{q}_n=e^{q_{n-1}-q_n}-e^{q_{n}-q_{n+1}}, \qquad 1\leq n\leq N,
\end{equation}
where $q_n=q_n(t)$ is the displacement of the $n$-th particle, and we use the standard notation $\dot f = \frac{df}{dt}$. An alternative formulation, that shows the complete integrability of the system, is due to Flaschka \cite{FlaschkaToda} and to Manakov \cite{ManakovToda}: by defining the new variables 
\begin{equation}
a_n^2=e^{q_{n-1}-q_{n}}, \qquad
b_n=-p_{n-1}
\end{equation}
where $p_n=\dot{q}_n$ is the momentum of the $n$-th particle, the equations of motion of the Toda lattice can be written as 
\begin{equation}\label{eq:TodaFlaschka}
\frac{d}{dt}a_n^2=a_n^2(b_{n-1}-b_{n}),\qquad 
\frac{d}{dt}b_n=a_{n}^2-a_{n+1}^2.
\end{equation}
If we take the semi-infinite case, with boundary condition $q_{-1}=-\infty$, 
then $a^2_0=0$ and the system can be written in the form of a Lax pair $\dot{J}=\left[J,B\right]$, where $J$ is a symmetric tridiagonal operator and $B$ is a skew symmetric bidiagonal operator:
\begin{equation}
J
=
\begin{pmatrix}
b_0 & a_1 & 0 & & \\
a_1 & b_1 & a_2 & \ddots  \\
0 & a_2 & b_2 & \ddots   \\
& \ddots & \ddots & \ddots \\
\end{pmatrix},\qquad
B
=
\frac{1}{2}\begin{pmatrix}
0 & a_1 & 0 & &  \\
-a_1 & 0 & a_2 & \ddots &\\
0 & -a_2 & 0 & \ddots &   \\
&  \ddots & \ddots & \ddots  \\
\end{pmatrix}.
\end{equation}

Because of this structure, the Toda lattice equations arise in a natural way in the theory of orthogonal polynomials: given a positive measure $\mu$ with finite moments on $\mathbb{R}$, one can construct a family of orthonormal polynomials $p_n(x)$ that satisfy
\[
\int_{\mathbb{R}} p_n(x) p_m(x) d\mu(x)=\delta_{n,m}
\]
for $n,m=0,1,\ldots$, as well as a three term recurrence relation
\begin{equation}\label{eq:TTRR}
xp_n(x)=a_{n+1}p_{n+1}(x)+b_np_n(x)+a_np_{n-1}(x),
\end{equation}
with initial values $p_{-1}(x)=0$ and $p_0(x)=\mu_0^{-1/2}$, where $\mu_0=\int_{\mathbb{R}} d\mu(x)$. Conversely, if we have a recurrence relation like \eqref{eq:TTRR} with $a_n>0$, Favard's theorem (see for instance \cite[Theorem 4.4]{Chihara} or \cite[Theorem 2.5.2]{Ismail}) guarantees existence of a measure $d\mu(x)$ with respect to which there exists a family of orthonormal polynomials that satisfies such a recurrence relation. A similar result holds if we introduce a deformation of the coefficients $a_n,b_n$ (and therefore of the measure of orthogonality, the orthonormal polynomials and all related quantities) with respect to a time parameter $t$. More precisely, if we take the deformed measure $e^{-tx}d\mu(x)$, it is direct to check that the deformation of the recurrence coefficients $a_n(t),b_n(t)$ with respect to $t$ coincides with the Toda lattice equations \eqref{eq:TodaFlaschka}. Other deformations of a similar kind, such as $e^{-tx^d}d\mu(x)$, with $d\geq 2$, lead to other lattice equations that are relevant as well. Multi-time deformations of the measure, such as $e^{\sum_{i=1}^{\infty} t_ix_i}d\mu(x)$, are of importance as well, leading to the discrete KP hierarchy \cite{AvM99}.

We note that instead of orthonormal polynomials $p_n(x)$, it is common to use monic ones $P_n(x)$, and we can write $p_n(x;t)=\gamma_n(t) P_n(x;t)$. In that case, the recurrence relation reads 
\begin{equation}\label{eq:TTRR_monic}
xP_n(x;t)=P_{n+1}(x;t)+b_n(t) P_n(x;t)+a_n^2(t) P_{n-1}(x;t).
\end{equation}
The corresponding Jacobi matrix is no longer symmetric, but we can make the transformation
\[
\widetilde{J}(t)=\Gamma^{-1}(t)\,J(t)\,\Gamma(t), \qquad \Gamma(t)=\textrm{diag}(\gamma_0(t),\gamma_1(t), \ldots).
\]
Then the Lax pair has the form $\dot{\widetilde{J}}=\left[\widetilde{J},\widetilde{B}\right]$, with 
\begin{equation}\label{eq:JBtilde}
\widetilde{J}
=
\begin{pmatrix}
b_0 & 1 & 0 & & \\
a_1^2 & b_1 & 1 & \ddots &\\
0 & \ddots & \ddots & \ddots \\
& \ddots & \ddots & \ddots \\
\end{pmatrix},\qquad
\widetilde{B}
=
\begin{pmatrix}
b_0 & 1 & 0 & & \\
0 & b_1 & 1 & \ddots &\\
& \ddots & \ddots & \ddots \\
& & \ddots & \ddots \\
\end{pmatrix}.
\end{equation}
We have included this formulation with monic orthogonal polynomials explicitly because it is analogous to the result that we present for matrix valued orthogonal polynomials (MVOPs) in Section \ref{subsec:Lax}.

A particularly important connection can be established between orthogonal polynomials and Painlev\'e equations, both differential and discrete, we refer the reader to \cite{Walter_book} for a recent account. In this context, Painlev\'e equations for recurrence coefficients arise as a result of compatibility between differential equations, recurrence relations and time evolution equations in $t$ satisfied by orthogonal polynomials, with a suitable $t$ deformation of the orthogonality measure.

The connection between integrable systems and matrix orthogonal polynomials has been explored in the literature, mainly in the direction of obtaining non-Abelian versions of classical objects such as Painlev\'e equations, see for example \cite{CafassoI,CafassoII}, for Hermite and Laguerre matrix polynomials, and also \cite{CCM2019,BFM_JMAA}. We also mention the recent work \cite{Arniz-2014} as well as \cite{BFGA_2020} on matrix Toda and Volterra equations, as well as a more general analysis of ladder operators for matrix valued orthogonal polynomials in \cite{DER}, based on ideas by Casper and Yakimov in \cite{Casper2}. The non-Abelian Toda lattice is a non-commutative analogue of the classical Toda lattice studied by Bruschi et al. \cite{Bruschi} and Gekhtman \cite{GeK}. One can obtain solutions of the non-Abelian Toda lattice by considering the coefficients of the three-term recurrence relation for matrix valued orthogonal polynomials where the matrix weight is deformed by an exponential factor. In this paper, we are interested in exploring further non-Abelian Toda-type identities for matrix valued deformations of matrix valued orthogonal polynomials.

We consider an $N\times N$ positive definite Hermitian matrix weight function $W(x)$ on the real line with trivial kernel (a.e) and a positive measure $\mu$
on the interval $[a,b]$, where $a$ and $b$ are allowed to be $\pm \infty$. Assuming that all the moments $\mu_m=\int_{\mathbb{R}} |x|^m W(x)dx$, $m=0,1,\ldots$,  are finite, $W$ defines a matrix valued inner product 
$\langle \cdot, \cdot \rangle$ on the space of matrix valued polynomials $M_N(\C)[x]$ by
\begin{equation}\label{eq:orth_general}
\langle P, Q \rangle = \int_a^b P(x) W(x) Q(x)^\ast d\mu(x),
\end{equation}
 where $^\ast$ denotes the conjugate transpose. 
It is well known that for such a weight there exists a unique sequence of monic matrix valued polynomials (MVOPs) $(P_n)_{n\geq 0}$ such that
\begin{equation}\label{eq:MVOPs}
    \langle P_n(x),P_m(x)\rangle
    =
    \int_{\mathbb{R}}
    P_n(x)W(x)P_m(x)^\ast d\mu(x)
    =
    \mathcal{H}_n\delta_{n,m},
\end{equation} 
where $\mathcal{H}_n$ is a positive definite matrix and $\delta_{n,m}$ is the standard Kronecker delta, see for instance \cite{Berg,DPS}. The sequence of monic matrix valued orthogonal polynomials $(P_n)_{n\geq 0}$ satisfies a three-term recurrence relation
\begin{equation}
\label{eq:three-term}
xP_n(x) = P_{n+1}(x)+B(n)P_n(x) +C(n) P_{n-1}(x),
\end{equation}
where the coefficients $B(n)$ and $C(n)$ are $N\times N$ matrices.

The classical Toda deformation for $W$ is analogous to the scalar case, and it is given by
$$W(x;t) = e^{-tx} W(x), \qquad t\in \mathbb{R},$$
see e.g. \cite{Miranian, IKR2}. The main goal of this paper is to study a new class of deformations 
of a weight matrix $W(x)$ by replacing the role of $x$ in the exponential term $e^{-tx}$ 
by a matrix polynomial $\Lambda(x)$ with a symmetry property with respect to the weight. This construction includes deformation with any scalar polynomial with real coefficients, which leads to the Toda hierarchy.


The paper is organized as follows: In Section \ref{sec:Fourier} we introduce the so called Fourier algebras of differential and difference operators associated to the weight $W$. In Section \ref{sec:deformations} we present the main results of the paper: Section \ref{subsec:sym} introduces deformations of the weight of the form $e^{-t\Lambda(x)}W(x)$, where $t\in\mathbb{R}$ and $\Lambda(x)$ satisfies a suitable symmetry condition with respect to $W(x)$; given this differential operator $\Lambda(x)$, there exists a corresponding difference operator $M(t)$, with coefficients $G_j(n;t)$, whose time evolution equations we study in Section \ref{subsec:Toda}; these equations can be written in the form of a Lax pair, involving tridiagonal and bidiagonal block operators, as shown in Section \ref{subsec:Lax}. In Section \ref{sec:polToda}, for a given $\Lambda(x)$, we study deformations of the form $e^{-v(\Lambda(x);t)}W(x)$, where $v(x;t)$ is any polynomial of degree $k$ with positive leading coefficient. We conclude the paper with a detailed example of deformed Hermite-type polynomials in Section \ref{sec:Hermite}.
 
\section{Fourier algebras of differential and difference operators}\label{sec:Fourier}

The three-term recurrence relation \eqref{eq:three-term} links the action of two operators on the sequence of matrix orthogonal polynomials $(P_n)_{n\geq 0}$. On the left hand side we have the operator multiplication by $x$, which is a symmetric operator with respect to the matrix inner product \eqref{eq:orth_general}, since $\langle xP , Q \rangle = \langle P, xQ\rangle$ for arbitrary matrix polynomials $P,Q$. As a differential operator (of order $0$), $x$ acts on $P_n$ from the right
$$P_n(x) \cdot x = xP_n(x).$$
The right hand side of \eqref{eq:three-term} can be interpreted as the action of a difference operator $\mathcal{L}$ on the sequence $(P_n)_{n\geq 0}$. More precisely, if we denote by $\delta^j$ the shift operator $\delta^j\cdot P_n(x) = P_{n+j}(x)$, we can rewrite \eqref{eq:three-term} as
\begin{equation}
\label{eq:three_term_as_opearators}
P_n (x) \cdot x  = \mathcal{L} \cdot P_n(x),\qquad \mathcal{L} = \delta + B(n) \delta^0 + C(n)\delta^{-1}.
\end{equation}
More generally, we consider a differential operator $\mathcal{D}$ which acts on the variable $x$ of $P_n(x)$ by
\begin{equation}
\label{eq:DifferentialOperator}
\D=\sum_{j=0}^n \partial_x^j F_j(x), \qquad P_n(x) \cdot \D  = \sum_{j=0}^n \left(\partial_x^j P_n(x)\right)\,  F_j(x), \qquad \partial_x^j := \frac{d^j}{dx^j},
\end{equation}
The matrix coefficients $F_j:\mathbb{C}\to M_N(\mathbb{C})$ are assumed to be rational functions of $x$. We observe that the matrix coefficients $F_j(x)$ multiply the polynomials from the right. Similarly, we consider a left action on the sequence $(P_n)_{n\geq 0}$ by difference operators of the form
\begin{equation}
\label{eq:DifferenceOperator}
M=\sum_{j=-\ell}^k G_j(n) \delta^j, \qquad M \cdot P_n(x) = \sum_{j=-\ell}^k G_j(n) \, \delta^j  P_n(x) = \sum_{j=-\ell}^k G_j(n) \, P_{n+j}(x).
\end{equation}
Here, $G_j:\mathbb{N}_0 \to \mathrm{M}_n(\mathbb{C})$ for all $j=-\ell,\ldots,k$, and $P_n(x)=0$ for all $n<0.$ We denote by $\mathcal{M}_N$ the algebra of all differential operators of the form \eqref{eq:DifferentialOperator} and by $\mathcal{N}_N$ the algebra of all difference operators of the form
\eqref{eq:DifferenceOperator}. 

Following \cite{Casper2}, we have the so called Fourier algebras of operators associated to $(P_n)_{n\geq 0}$:
\begin{equation}
\label{eq:definition-Fourier-algebras}
\begin{split}
\mathcal{F}_L(P)&=\{ M\in \mathcal{N}_N \colon \exists \D\in \mathcal{M}_N,\, M\cdot P = P\cdot \D \} \subset \mathcal{N}_{N},\\
 \mathcal{F}_R(P)&=\{ \D\in \mathcal{M}_N \colon \exists M\in \mathcal{N}_N,\, M\cdot P = P\cdot \D \}\subset \mathcal{M}_{N}.
\end{split}
\end{equation}
Observe that the relation \eqref{eq:three_term_as_opearators} implies that for any weight matrix $W$ and the corresponding monic orthogonal polynomials $(P_n)_{n\geq 0}$, we have that $x\in \mathcal{F}_R(P)$ and $\mathcal{L}\in \mathcal{F}_L(P)$. The two Fourier algebras $\mathcal{F}_R(P)$ and $\mathcal{F}_L(P)$ are isomorphic via the  generalized Fourier map
\begin{equation}
\varphi: \mathcal{F}_L(P)\mapsto \mathcal{F}_R(P),
\end{equation}
given by $\varphi(M)=D$, whenever $P_n\cdot D = M\cdot P_n$. 

For a differential operator $\mathcal{D}\in \mathcal{M}_N$, we say that $\mathcal{D}^\dagger$ is an adjoint of $\mathcal{D}$ in $M_N(\C)[x]$ if
$$\langle P\cdot \mathcal{D} , Q \rangle = \langle P, Q\cdot \mathcal{D}^\dagger\rangle,\qquad \text{ for all }P,Q \in M_N(\C)[x].$$
Similarly, for a difference operator $M\in \mathcal{N}_N$, we say that $M^\dagger$ is an adjoint of $M$ in $M_N(\C)[x]$ if
$$\langle M\cdot P_n, P_m \rangle = \langle P_n, M^\dagger \cdot P_m \rangle,\qquad \text{ for all }n,m \in \N_0.$$
The operators $\mathcal{D}$ and $M$ are symmetric if $\mathcal{D}=\mathcal{D}^\dagger$ and 
$M=M^\dagger$, respectively. As proved in \cite[Corollary 3.8]{Casper2}, the left and right Fourier algebras are closed under the adjoint $\dagger$.

The main property of the matrix valued inner product \eqref{eq:orth_general} is the fact that the operator multiplication by $x$ is a symmetric operator: $\langle P\cdot x,Q\rangle = \langle P, Q\cdot x\rangle$ for all matrix valued polynomials $P$ and $Q$. The three-term recurrence relation \eqref{eq:three_term_as_opearators} for the monic matrix valued orthogonal polynomials is a consequence of this property. This property is also essential in the study of the time evolution of the recurrence coefficients of deformations of the weight matrix by a factor of the form $e^{v(x;t)}$, where $v$ is a polynomial of $x$ depending on the time $t$. This setting was considered in \cite[Section 7]{DER}, see also \cite{IKR2}.

In this paper, we will be concerned with operators $\Lambda \in \mathcal{M}_N$ of order zero which are symmetric with respect to the inner product \eqref{eq:orth_general}, this is
$$\langle P\cdot \Lambda(x) , Q \rangle = \langle P, Q\cdot \Lambda(x) \rangle,$$
for all $P,Q\in M_N(\mathbb{C})[x]$. Symmetric operators of order zero are characterized as follows:
\begin{definition}
\label{def:1}
We denote by $\mathcal{S}(W)$ the set of all symmetric operators of order zero:
\begin{equation}\label{eq:defS}
\mathcal{S}(W) :=
\{\Lambda(x) \in \mathcal{M}_N: \Lambda(x) W(x) = W(x)\Lambda(x)^\ast ,\quad \forall x\in[a,b]\}.
\end{equation}
\end{definition}
According to \cite[Theorem 3.7]{Casper2} the symmetry condition implies that $\Lambda\in \mathcal{F}_R(P)$, and therefore, there exists a corresponding difference operator $\varphi^{-1}(\Lambda) \in \mathcal{F}_L(P)$. Moreover if we set $n=0$ in the relation
$$P_n(x)\cdot \Lambda(x)=\varphi^{-1}(\Lambda) \cdot P_n(x), \qquad n\in \N_0,$$
we obtain that $\Lambda$ is a matrix valued polynomial. It follows that $\mathcal{S}(W) \subseteq \mathcal{F}_R(P)$.

Constant symmetric operators of order zero lead to reducible weight matrices, see for instance \cite{KR,TiraoZurrian}, and for this reason it is of great interest to determine whether there exist such operators for a given 
matrix weight. In general, this is not an easy task, see for instance \cite{KR,KdlRR,IKR2,KoelinkRlaguerre}.

In this paper we deal with symmetric operators of order zero and higher degree (i.e. not constant), whose existence does not say anything about the reducibility of the weight matrix, see for instance \cite{DER,EMR} for non-trivial examples.

\section{Matrix valued deformations of the weight}\label{sec:deformations}
\subsection{Symmetry conditions}\label{subsec:sym}
The goal of section is to introduce a class of deformations of a weight matrix $W(x)$ by replacing the role of $x$ with a symmetric operator of order zero $\Lambda(x)\in \mathcal{S}(W)$. We consider a deformation of the form
\begin{equation}\label{eq:WLambda}
W(x;t)
=
e^{-t \Lambda(x)}W(x), \qquad t\in\mathbb{R}.
\end{equation}
We can verify that the deformed weight is Hermitian:
\[
W(x;t)^\ast
=
W(x)^\ast e^{-t \Lambda(x)^\ast}
=
W(x) e^{-t \Lambda(x)^\ast}
=
e^{-t \Lambda(x)} W(x) 
=
W(x;t).
\]
This property also follows from Definition \ref{def:1}, since the weight matrix $W(x;t)$ can be written in the symmetric form
$$
W(x;t)
=
e^{-\frac{t}{2} \Lambda(x)}W(x) 
e^{-\frac{t}{2} \Lambda(x)^\ast}
.$$
As for the classical Toda deformation, the main goal is to obtain evolution equations for relevant quantities related to MVOPs with respect to $t$. The symmetry condition in Definition \ref{def:1} is a key property that will allow us to move the $\Lambda(x)$ factor when differentiating with respect to $t$ and integrating by parts.

 Assuming that all moments of the deformed weight are finite, we construct a sequence of monic orthogonal polynomials $(P_n(x;t))_{n\geq 0}$ that satisfy
\begin{equation}\label{eq:MVOPs_t}
    \langle P_n(x;t),P_m(x;t)\rangle_t
    :=
    \int_{\mathbb{R}}
    P_n(x;t)W(x;t)P_m(x;t)^\ast d\mu(x)
    =
    \mathcal{H}_n(t)\delta_{n,m},
\end{equation} 
where $\mathcal{H}_n(t)$ is an invertible positive definite matrix. Associated to the weight matrix $W(x;t)$, we have the deformed left and right Fourier algebras $\mathcal{F}_L(P;t)$ and $\mathcal{F}_R(P;t)$. We observe that
\begin{equation}
\label{eq:lamda-intersection}
\Lambda(x)W(x;t)
=
\Lambda(x)e^{-t\Lambda(x)}W(x)
=
e^{-t\Lambda(x)}\Lambda(x)W(x)
=
e^{-t\Lambda(x)}W(x)\Lambda(x)^\ast
=
W(x;t)\Lambda(x)^\ast.
\end{equation}
This implies that $\Lambda(x)$ is in the intersection of all the $\mathcal{F}_R(P;t)$, for $t\geq 0$. Moreover, we observe that 
for any polynomial $v$ with real coefficients, $v(\Lambda(x))\in S(W(x;t))$ and therefore $v(\Lambda(x))\in \mathcal{F}_R(P;t)$. 

As a consequence, for every $t>0$ there exists $M(t)=\varphi^{-1}_t(\Lambda(x))\in\mathcal{F}_L(P;t)$, such that 
\begin{equation}
\label{eq:actionLambdaPn}
P_n(x;t)\cdot \Lambda(x)=M(t) \cdot P_n(x;t), \qquad n\in \N_0.
\end{equation}
\begin{rmk}
As in the previous section, \eqref{eq:actionLambdaPn} implies that $\Lambda(x)$ is a matrix valued polynomial. If
\begin{equation}\label{eq:Lambdak}
\Lambda(x)=\Lambda_k x^k+\Lambda_{k-1}x^{k-1}+\cdots+\Lambda_0,
\end{equation}
then by \cite[Definition 3.4]{Casper2} and the fact that $\Lambda(x)=\Lambda(x)^\dagger$ and $M(t)=M(t)^\dagger$, the difference operator $M(t)$ has $2k+1$ terms
\begin{equation}
\label{eq:M-2k+1}
M(t)=\sum_{j=-k}^k G_j(n;t) \delta^j.
\end{equation}
\end{rmk}
For the rest of the paper we will be concerned with the time evolution of the coefficients $G_j(n;t)$ of the operator \eqref{eq:M-2k+1}.

\begin{lem}
\label{lem:3}
The coefficient $G_k$ of $M(t)$ is independent of $t$ and $n$.
\end{lem}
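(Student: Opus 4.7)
The plan is to exploit the fact that the identity $P_n(x;t)\cdot \Lambda(x) = M(t)\cdot P_n(x;t)$ is a polynomial identity in $x$ and to match leading coefficients. Both sides are matrix valued polynomials in $x$ of degree $n+k$, so comparing the top degree terms should pin down $G_k(n;t)$ in terms of $\Lambda_k$.

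First, I would write the left hand side explicitly using the monicity of $P_n(x;t)$. Since $P_n(x;t) = x^n I + (\text{terms of degree} < n)$, multiplying on the right by $\Lambda(x) = \Lambda_k x^k + \Lambda_{k-1} x^{k-1} + \cdots + \Lambda_0$ gives
\begin{equation*}
P_n(x;t)\cdot \Lambda(x) = \Lambda_k\, x^{n+k} + (\text{terms of degree} < n+k).
\end{equation*}
Note that because the differential operator $\Lambda(x)$ acts from the right, its matrix coefficients $\Lambda_j$ multiply the identity leading coefficient of $P_n$ from the right, producing $\Lambda_k$ (not $\Lambda_k$ conjugated or anything else) as the leading matrix.

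Next, I would expand the right hand side using \eqref{eq:M-2k+1}:
\begin{equation*}
M(t)\cdot P_n(x;t) = \sum_{j=-k}^{k} G_j(n;t)\, P_{n+j}(x;t).
\end{equation*}
Only the $j=k$ summand contributes to degree $n+k$, since $P_{n+j}(x;t)$ has degree $n+j \le n+k-1$ for $j\le k-1$ (and vanishes by convention for $n+j<0$). Using monicity of $P_{n+k}(x;t)$, the top degree term of the right hand side is $G_k(n;t)\, x^{n+k}$.

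Matching the coefficients of $x^{n+k}$ in the two expressions yields $G_k(n;t) = \Lambda_k$ for every $n\in\mathbb{N}_0$ and every $t\ge 0$, which is exactly the statement of the lemma. I do not anticipate a real obstacle here: the argument is essentially a degree count, and the only subtlety is being careful about the right action of $\Lambda(x)$ and the convention $P_n=0$ for $n<0$, which does not affect the leading term analysis.
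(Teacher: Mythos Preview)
Your proposal is correct and follows essentially the same approach as the paper's own proof: compare the leading coefficients of $P_n(x;t)\cdot\Lambda(x)$ and $M(t)\cdot P_n(x;t)$, using monicity, to conclude $G_k(n;t)=\Lambda_k$.
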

\begin{proof}
The proof follows by comparing the leading coefficients of $P_n(x;t)\cdot \Lambda(x)$ and $M(t)\cdot P_n(x;t)$.
Since $P_n(x;t)\cdot \Lambda(x)=M(t)\cdot P_n(x;t)$ for all $t\geq0,$ and 
$$P_n(x;t)\cdot \Lambda(x)=x^{n+k}\Lambda_k + \text{lower order terms},$$
$$M(t)\cdot P_n(x;t)=G_k(n;t)x^{n+k}+\text{lower order terms},$$
by comparing the coefficient of $x^{n+k}$ we get $G_k(n;t)=\Lambda_k,$ and therefore $G_k(n;t)$ is independent of $t$ and $n$. 
\end{proof}
\begin{prop}
The following relation holds true:
\begin{equation}\label{eq:WeakPearson-H}
\begin{aligned}
    G_{\ell}(n;t)\mathcal{H}_{n+\ell}(t)&=\mathcal{H}_n(t)G_{-\ell}(n+\ell;t)^\ast,\qquad \ell=-k,\ldots,k.
\end{aligned}
\end{equation}
\end{prop}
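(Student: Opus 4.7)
The plan is to exploit the symmetry of the difference operator $M(t)$ with respect to the deformed inner product $\langle\cdot,\cdot\rangle_t$. Since $\Lambda(x)\in \mathcal{S}(W(x;t))$ (as already verified in \eqref{eq:lamda-intersection}), the operator $\Lambda(x)$ is its own adjoint in $\mathcal{F}_R(P;t)$. Because the generalized Fourier map $\varphi_t$ is compatible with the adjoint $\dagger$ (as noted just before the Remark, and as recorded in the Remark itself), the corresponding difference operator $M(t)=\varphi_t^{-1}(\Lambda(x))$ is also symmetric, i.e.\ $M(t)=M(t)^\dagger$. This gives the identity
\[
\langle M(t)\cdot P_n(\cdot;t), P_m(\cdot;t)\rangle_t
=
\langle P_n(\cdot;t), M(t)\cdot P_m(\cdot;t)\rangle_t
\]
for all $n,m\in\N_0$.

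Next, I would expand both sides using the expression \eqref{eq:M-2k+1} for $M(t)$ and the orthogonality relation \eqref{eq:MVOPs_t}. On the left, since the matrix coefficients $G_j(n;t)$ act from the left, one obtains
\[
\langle M(t)\cdot P_n, P_m\rangle_t
= \sum_{j=-k}^{k} G_j(n;t)\,\langle P_{n+j},P_m\rangle_t
= G_{m-n}(n;t)\,\mathcal{H}_m(t),
\]
understood to be zero when $|m-n|>k$. On the right, the matrix coefficient $G_j(m;t)$ of $M(t)$ is pulled out by the rule $\langle P,QA\rangle_t = \langle P,Q\rangle_t A^\ast$, giving
\[
\langle P_n, M(t)\cdot P_m\rangle_t
= \sum_{j=-k}^{k}\langle P_n,P_{m+j}\rangle_t\, G_j(m;t)^\ast
=\mathcal{H}_n(t)\,G_{n-m}(m;t)^\ast.
\]
Setting $\ell=m-n$ so that $m=n+\ell$ and $n-m=-\ell$ yields exactly \eqref{eq:WeakPearson-H}.

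The only nontrivial step is the justification that $M(t)=M(t)^\dagger$; however, this is already provided in the Remark preceding the proposition (a consequence of the closure of the Fourier algebras under the adjoint, established in \cite[Corollary 3.8]{Casper2}, together with $\Lambda(x)=\Lambda(x)^\dagger$). The remainder is a direct bookkeeping of left/right actions of matrix coefficients, keeping in mind that the inner product is conjugate-linear in the second entry, which is what produces the conjugate transpose on $G_{-\ell}(n+\ell;t)$ on the right-hand side of \eqref{eq:WeakPearson-H}.
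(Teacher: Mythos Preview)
Your proof is correct and follows essentially the same argument as the paper's: both use the symmetry $M(t)=M(t)^\dagger$ to pass from $\langle M(t)\cdot P_n,P_{n+\ell}\rangle_t$ to $\langle P_n,M(t)\cdot P_{n+\ell}\rangle_t$ and then collapse each side by orthogonality. One cosmetic slip: the sesquilinearity rule you quote should read $\langle P,AQ\rangle_t=\langle P,Q\rangle_t\,A^\ast$ (constant $A$ acting on the left in the second argument), not $\langle P,QA\rangle_t$; your subsequent computation nevertheless applies the correct version.
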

\begin{proof}
For $\ell\in\{-k,\ldots,k\}$, we have

$$G_\ell(n;t)P_{n+\ell}(x;t)=M(t)\cdot P_n(x;t)-\sum_{j=-k,\,\, j\neq\ell}^k G_j(n;t)P_{n+j}(x;t),$$
then by orthogonality and by the fact that $M^\dagger(t)=M(t),$
\begin{equation*}
\begin{aligned}
    G_\ell(n;t)\mathcal{H}_{n+\ell}(t)
    =
    \left\langle G_{\ell}(n;t)P_{n+\ell}(x;t),P_{n+\ell}(x;t)\right\rangle_t
    &=
    \left\langle M(t)\cdot P_{n}(x;t),P_{n+\ell}(x;t)\right\rangle_t\\
    &=
    \left\langle P_{n}(x;t),M(t)\cdot P_{n+\ell}(x;t)\right\rangle_t\\
    &= \mathcal{H}_{n}(t)G_{-\ell}(n+\ell;t)^\ast.
\end{aligned}
\end{equation*}
This completes the proof of the proposition. 
\end{proof}

\begin{rmk}
Observe that equation \eqref{eq:WeakPearson-H} has the form of the so called weak Pearson equation, see \cite[Section 2.3]{EMR}.
\end{rmk}

\subsection{Toda-type equations}\label{subsec:Toda}

In this section, we obtain time evolution equations in the variable $t$ for the coefficients $G_j(n;t)$. In analogy with the scalar case, we call them Toda-type equations.

We recall the notation $\langle\cdot,\cdot\rangle_t$ for the inner product introduced in \eqref{eq:MVOPs_t}, and we start with two auxiliary identities, which will be needed for the proof of the main theorem.

\begin{lem}
The following relation holds true:
\begin{equation}\label{eq:diffHn}
\dot{\mathcal{H}}_n(t)
    =-G_0(n;t)\mathcal{H}_n(t)
    =-\mathcal{H}_n(t)G_0(n;t)^\ast.
\end{equation}
\end{lem}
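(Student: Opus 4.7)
The plan is to differentiate the defining integral for $\mathcal{H}_n(t)$ under the integral sign and then exploit orthogonality together with the symmetry of $\Lambda$ and the operator identity $P_n(x;t)\cdot\Lambda(x)=M(t)\cdot P_n(x;t)$.

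Starting from
\[
\mathcal{H}_n(t)=\int_a^b P_n(x;t)\,W(x;t)\,P_n(x;t)^\ast\, d\mu(x),
\]
I would apply the product rule with respect to $t$ and analyze the three resulting terms separately. Because $P_n(x;t)$ is monic of degree $n$, its $t$-derivative $\dot P_n(x;t)$ is a matrix polynomial of degree strictly less than $n$; hence by the orthogonality relation \eqref{eq:MVOPs_t}, both terms $\langle \dot P_n,P_n\rangle_t$ and $\langle P_n,\dot P_n\rangle_t$ vanish.

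The only surviving contribution is the one arising from $\dot W(x;t)$. From \eqref{eq:WLambda} and \eqref{eq:lamda-intersection},
\[
\dot W(x;t)=-\Lambda(x)\,W(x;t)=-W(x;t)\,\Lambda(x)^\ast,
\]
so there are two equivalent ways to move $\Lambda$ through. Using the first form and then the key identity \eqref{eq:actionLambdaPn} gives
\[
\dot{\mathcal{H}}_n(t)=-\int_a^b P_n(x;t)\,\Lambda(x)\,W(x;t)\,P_n(x;t)^\ast\, d\mu(x)
=-\langle P_n\cdot\Lambda,\,P_n\rangle_t
=-\Bigl\langle\sum_{j=-k}^k G_j(n;t)\,P_{n+j},\,P_n\Bigr\rangle_t.
\]
By orthogonality only the $j=0$ summand survives, yielding $\dot{\mathcal{H}}_n(t)=-G_0(n;t)\mathcal{H}_n(t)$.

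For the second equality I would use the alternative form $\dot W=-W\Lambda^\ast$ to obtain
\[
\dot{\mathcal{H}}_n(t)=-\langle P_n,\,P_n\cdot\Lambda\rangle_t=-\langle P_n,\,M(t)\cdot P_n\rangle_t=-\mathcal{H}_n(t)\,G_0(n;t)^\ast,
\]
again by orthogonality. Equivalently, one can derive the second identity from the first by invoking \eqref{eq:WeakPearson-H} with $\ell=0$, which gives $G_0(n;t)\mathcal{H}_n(t)=\mathcal{H}_n(t)G_0(n;t)^\ast$. There is no real obstacle here; the only subtle point is verifying that $\dot P_n$ has degree $<n$ so that the two polynomial-derivative terms drop out, together with the careful use of the symmetry $\Lambda(x)W(x;t)=W(x;t)\Lambda(x)^\ast$ to get the right-action form needed to apply $M(t)=M(t)^\dagger$.
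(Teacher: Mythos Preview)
Your proposal is correct and follows essentially the same route as the paper: differentiate $\mathcal{H}_n(t)$, use monicity to kill the $\dot P_n$ contributions, reduce to $-\langle P_n\cdot\Lambda,P_n\rangle_t=-\langle M(t)\cdot P_n,P_n\rangle_t$, and pick out the $j=0$ term by orthogonality; the paper obtains the second equality solely via \eqref{eq:WeakPearson-H} with $\ell=0$, which you also mention as an alternative to your direct computation with $\dot W=-W\Lambda^\ast$.
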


\begin{proof}
If we differentiate $\mathcal{H}_n(t)$ with respect to $t$, we obtain, by using \eqref{eq:MVOPs_t}, orthogonality and monicity of $P_n(x;t)$, that
\begin{equation*}
 \frac{d}{dt}\mathcal{H}_n(t)
 =
 -\langle P_n(x;t)\cdot\Lambda(x),P_n(x;t)\rangle_t
 =
 -\langle M(t)\cdot P_n(x;t),P_n(x;t)\rangle_t.
 \end{equation*}
 By orthogonality, we get
 \begin{equation*}
 \frac{d}{dt}\mathcal{H}_n(t)
 =
 -G_0(n;t)\mathcal{H}_n(t).
 \end{equation*}
The second equality in \eqref{eq:diffHn} follows directly from Proposition 4 with $\ell=0$.
\end{proof}

\begin{lem}
For $m=-k,\ldots,k$, we have the identity
\begin{equation}
G_m(n;t)\mathcal{H}_{n+m}(t)
=
\langle M(t)\cdot P_n(x;t),P_{n+m}(x,t)\rangle_t
=
\langle P_n(x;t),P_{n+m}(x,t)\cdot\Lambda(x)\rangle_t
\end{equation}
\end{lem}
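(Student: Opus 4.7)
The plan is to prove the two equalities in turn, each a direct application of material already set up in the excerpt.

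For the first equality, I would expand the left action of $M(t)$ on $P_n(x;t)$ using its definition together with \eqref{eq:M-2k+1}, writing
\[
M(t)\cdot P_n(x;t) = \sum_{j=-k}^{k} G_j(n;t)\,P_{n+j}(x;t).
\]
Pairing this with $P_{n+m}(x;t)$ in the inner product $\langle\cdot,\cdot\rangle_t$, the orthogonality relation \eqref{eq:MVOPs_t} annihilates every term except $j=m$, leaving exactly $G_m(n;t)\mathcal{H}_{n+m}(t)$. Here I use that the matrix coefficient $G_m(n;t)$, which sits on the left of $P_{n+m}(x;t)$, pulls out on the left of the inner product.

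For the second equality, I would invoke the Fourier relation \eqref{eq:actionLambdaPn}, which lets me replace $M(t)\cdot P_n(x;t)$ by $P_n(x;t)\cdot\Lambda(x)$ inside the inner product, giving
\[
\langle M(t)\cdot P_n(x;t),P_{n+m}(x;t)\rangle_t
=
\langle P_n(x;t)\cdot\Lambda(x),P_{n+m}(x;t)\rangle_t.
\]
Then the key point is that $\Lambda(x)$ is symmetric with respect to the deformed inner product: the computation \eqref{eq:lamda-intersection} shows $\Lambda(x)W(x;t)=W(x;t)\Lambda(x)^\ast$, so by Definition \ref{def:1} applied to the deformed weight, $\Lambda\in\mathcal{S}(W(x;t))$. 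A direct calculation from \eqref{eq:orth_general} using this intertwining identity moves $\Lambda(x)$ from one factor to the other, yielding
\[
\langle P_n(x;t)\cdot\Lambda(x),P_{n+m}(x;t)\rangle_t
=
\langle P_n(x;t),P_{n+m}(x;t)\cdot\Lambda(x)\rangle_t,
\]
which is the desired identity.

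There is no real obstacle here; both equalities are essentially bookkeeping, tying together the difference-operator expansion of $M(t)$, the Fourier intertwining $P_n\cdot\Lambda=M\cdot P_n$, and the symmetry of $\Lambda$ with respect to $\langle\cdot,\cdot\rangle_t$. The only minor point requiring care is keeping track of the left-versus-right matrix multiplication in the inner product so that the coefficient $G_m(n;t)$ ends up on the correct side of $\mathcal{H}_{n+m}(t)$.
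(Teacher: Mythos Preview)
Your proposal is correct and matches the paper's proof essentially line for line: the first equality by expanding $M(t)\cdot P_n$ and using orthogonality, the second by the Fourier relation $P_n\cdot\Lambda=M\cdot P_n$ together with the symmetry $\Lambda=\Lambda^\dagger$ with respect to the deformed inner product. The paper phrases the second step more tersely (``the construction of the operators $\Lambda(x)$ and $M(t)$ and the fact that $\Lambda(x)=\Lambda(x)^\dagger$''), but your explicit invocation of \eqref{eq:lamda-intersection} to justify $\Lambda\in\mathcal{S}(W(x;t))$ is exactly what underlies that phrase.
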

\begin{proof}
We observe that, for $m=-k,\ldots,k$, using orthogonality,
\[
\begin{aligned}
\langle M(t)\cdot P_n(x;t),P_{n+m}(x,t)\rangle_t
&=
\sum_{j=-k}^k 
\langle G_j(t)P_{n+j}(x;t),P_{n+m}(x,t)\rangle_t\\
&=
\langle G_m(n;t)P_{n+m}(x;t),P_{n+m}(x,t)\rangle_t
=G_m(n;t)\mathcal{H}_{n+m}(t).
\end{aligned}
\]
The second equality follows from the construction of the operators $\Lambda(x)$ and $M(t)$ and the fact that $\Lambda(x)=\Lambda(x)^\dagger$.
\end{proof}

The main result in this section is presented in the next Theorem, which gives time evolution equations for the coefficients $G_m(n;t)$ of the difference operator $M(t)$ in terms of $t$. We note that we can have positive and negative values of $m$, and these two cases need to be separated when working out the time evolution equations.
\begin{theorem}
\label{thm:toda-type-eq}
The coefficients $G_m(n;t)$ satisfy the following time evolution equations in $t$:
\begin{align*}
\dot{G}_m(n;t)&=\sum_{j=-k}^{m}G_j(n;t)G_{m-j}(n+j;t) 
- \sum_{j=0}^{k+m} G_j(n;t)G_{m-j}(n+j;t), \qquad m=-k,\ldots,-1, \\
\dot{G}_m(n;t)&=\sum_{j=-k+m}^{-1}G_j(n;t)G_{m-j}(n+j;t)  - \sum_{j=m+1}^{k}G_j(n;t)G_{m-j}(n+j;t), \qquad m=0,\ldots,k.
\end{align*}
\end{theorem}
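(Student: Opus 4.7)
The plan is to differentiate the defining relation $P_n(x;t)\cdot\Lambda(x) = M(t)\cdot P_n(x;t)$ with respect to $t$ and to read off $\dot{G}_m(n;t)$ as the coefficient of $\delta^m$ in a commutator of difference operators. The key auxiliary ingredient will be an explicit formula for $\dot{P}_n(x;t)$.

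Since $P_n$ is monic of degree $n$, $\dot{P}_n$ has degree at most $n-1$ and expands uniquely in the orthogonal basis as $\dot{P}_n = \sum_{l<n}\langle\dot{P}_n,P_l\rangle_t\,\mathcal{H}_l(t)^{-1}\,P_l$. Differentiating $\langle P_n,P_l\rangle_t = 0$ for $l<n$, noting that $\langle P_n,\dot{P}_l\rangle_t = 0$ because $\dot{P}_l$ has degree $<l<n$, and using $\dot{W}(x;t) = -\Lambda(x)\,W(x;t)$ together with $P_n\cdot\Lambda = M(t)\cdot P_n$ yields
\[
\langle\dot{P}_n,P_l\rangle_t \;=\; \langle P_n\cdot\Lambda,\,P_l\rangle_t \;=\; \langle M(t)\cdot P_n,\,P_l\rangle_t \;=\; G_{l-n}(n;t)\,\mathcal{H}_l(t),
\]
which vanishes unless $-k\le l-n\le -1$. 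Writing $M_-(t) := \sum_{j=-k}^{-1} G_j(n;t)\,\delta^j$ for the strictly negative part of $M(t)$, this gives $\dot{P}_n = M_-(t)\cdot P_n$, with the convention $P_{n+j}=0$ for $n+j<0$.

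Differentiating $P_n\cdot\Lambda = M(t)\cdot P_n$ in $t$ (and using that $\Lambda$ does not depend on $t$) gives $\dot{P}_n\cdot\Lambda = \dot{M}(t)\cdot P_n + M(t)\cdot\dot{P}_n$. Substituting $\dot{P}_n = M_-(t)\cdot P_n$ and expanding $P_{n+r}\cdot\Lambda = M(t)\cdot P_{n+r}$ transforms the left-hand side into $M_-(t)M(t)\cdot P_n$ and the last term on the right into $M(t)M_-(t)\cdot P_n$, so
\[
\dot{M}(t)\cdot P_n \;=\; \bigl[M_-(t),\,M(t)\bigr]\cdot P_n \qquad\text{for all } n\in\N_0,
\]
hence $\dot{M}(t) = [M_-(t),M(t)]$ as difference operators. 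Computing the coefficient of $\delta^m$ of the commutator using $\delta^j G_i(n) = G_i(n+j)\,\delta^j$ gives
\[
\dot{G}_m(n;t) \;=\; \sum_{j\in S_1(m)} G_j(n;t)\,G_{m-j}(n+j;t) \;-\; \sum_{j\in S_2(m)} G_j(n;t)\,G_{m-j}(n+j;t),
\]
with $S_1(m) = \{-k,\ldots,-1\}\cap\{m-k,\ldots,m+k\}$ (coming from $M_-M$) and $S_2(m) = \{-k,\ldots,k\}\cap\{m+1,\ldots,m+k\}$ (coming from $MM_-$).

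The final step is to check that these ranges reduce to the theorem's stated forms. For $m\in\{0,\ldots,k\}$ one has $S_1(m) = \{m-k,\ldots,-1\}$ and $S_2(m) = \{m+1,\ldots,k\}$, which are disjoint and reproduce the second formula verbatim. For $m\in\{-k,\ldots,-1\}$ one has $S_1(m) = \{-k,\ldots,-1\}$ and $S_2(m) = \{m+1,\ldots,m+k\}$; the indices $j\in\{m+1,\ldots,-1\}$ lie in both sums and contribute identical terms, which cancel, leaving $\{-k,\ldots,m\}$ in the first sum and $\{0,\ldots,m+k\}$ in the second, matching the first formula. I expect the derivation of $\dot{P}_n = M_-(t)\cdot P_n$ to be the main conceptual step; the passage to the commutator and the subsequent index bookkeeping are mechanical, the only subtlety being the term-by-term cancellation that produces the cleaner form for negative $m$.
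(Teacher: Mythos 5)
Your argument is correct, and it reaches the stated equations by a genuinely different route than the paper. The paper works entirely at the level of inner products: it differentiates $G_m(n;t)\mathcal{H}_{n+m}(t)=\langle P_n(x;t),P_{n+m}(x;t)\cdot\Lambda(x)\rangle_t$, splits the derivative into three pieces $I_1,I_2,I_3$ (coming from $\dot P_n$, from $\dot W=-\Lambda W$, and from $\dot P_{n+m}$), evaluates each via orthogonality and the weak Pearson relation \eqref{eq:WeakPearson-H}, and then removes $\mathcal{H}_{n+m}(t)$ using $\dot{\mathcal{H}}_n=-G_0(n;t)\mathcal{H}_n$. You instead first establish $\dot P_n=M_-(t)\cdot P_n$ with $M_-$ the strictly lower part of $M$ --- this is exactly the mechanism of the paper's Lemma \ref{lem12}, which the paper only states later, in Section \ref{sec:polToda} --- and then differentiate the intertwining relation to obtain the operator identity $\dot M=[M_-,M]$, from which the component equations follow by reading off the coefficient of $\delta^m$ and cancelling the overlap of index ranges for negative $m$. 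Since $[M_-,M]=[L-L^+,L]=[L,L^+]$ in the block-matrix notation of Section \ref{subsec:Lax}, you have in effect proved the paper's Lax pair theorem first and deduced Theorem \ref{thm:toda-type-eq} as its componentwise form, reversing the paper's logical order (the paper proves the Toda-type equations by hand and then verifies the Lax pair against them). Your route is arguably cleaner and makes the integrable structure manifest from the start; the paper's route avoids having to argue that an identity of the form $N\cdot P_n=0$ for all $n$ forces the difference operator $N$ to vanish (a point you should state explicitly, though it follows easily by comparing leading coefficients of the monic $P_{n+j}$, and in any case is not strictly needed since the coefficient of $P_{n+m}$ in $\dot M\cdot P_n=[M_-,M]\cdot P_n$ already gives $\dot G_m(n;t)$ directly). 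All index bookkeeping in your final step checks out against the two stated cases.
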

\begin{proof}
In order to derive the time evolution equations for $G_m(n;t)$, we note that for $m=-k,\ldots,k$, we have
\begin{equation}\label{eq:ddtGm}
\begin{aligned}
\frac{d}{dt}\left(G_m(n;t)\mathcal{H}_{n+m}(t)\right)
&=
\frac{d}{dt}
\langle P_n(x;t),P_{n+m}(x;t)\cdot\Lambda(x)\rangle_t=I_1(t)+I_2(t)+I_3(t),
\end{aligned}
\end{equation}
where
\begin{align*}
I_1(t) &= \langle \dot{P}_n(x;t),P_{n+m}(x,t)\cdot\Lambda(x)\rangle_t, \qquad 
I_2(t) = -\langle P_n(x;t)\cdot\Lambda(x),P_{n+m}(x,t)\cdot\Lambda(x)\rangle_t, \\
I_3(t) &=  \langle P_n(x;t),\dot{P}_{n+m}(x,t)\cdot\Lambda(x)\rangle_t.
\end{align*}
The term $I_2(t)$ is given by
\[
\begin{aligned}
I_2(t)
&=
-\langle P_n(x;t)\cdot\Lambda(x),P_{n+m}(x,t)\cdot\Lambda(x)\rangle_t\\
&=-\sum_{j,\ell=-k}^{k}\langle G_j(n;t)P_{n+j}(x;t), G_{\ell}(n+m;t)P_{n+m+\ell}(x;t) \rangle_t\\
&=
-\sum_{j=-k}^{k}
G_j(n;t)\mathcal{H}_{n+j}(t)G_{j-m}(n+m;t)^\ast\\
&=-\sum_{j=\max\{-k+m,-k\}}^{\min\{k+m,k\}}G_j(n;t)\mathcal{H}_{n+j}(t)G_{j-m}(n+m;t)^\ast.
\end{aligned}
\]
As in \eqref{eq:DifferenceOperator} we assume that $\mathcal{H}_n=0$ for all $n<0.$ Moreover, we observe that the index in the previous sum is restricted because otherwise the $G_j$ coefficients are zero.

Next we compute the term $I_1(t)$. For $n+m+j\leq n-1$ we get
\[
\begin{aligned}
    0&=
    \frac{d}{dt}\langle P_n(x;t),P_{n+m+j}(x;t)\rangle_t\\
    &=
    \langle \dot{P}_{n}(x;t),P_{n+m+j}(x;t)\rangle_t
    -
    \langle P_{n}(x;t)\cdot\Lambda(x),P_{n+m+j}(x;t)\rangle_t
    +
    \langle P_{n}(x;t),\dot{P}_{n+m+j}(x;t)\rangle_t,
\end{aligned}
\]
and the last term is equal to zero. Then 
\begin{equation*}
\begin{aligned}
\langle \dot{P}_n(x;t),P_{n+m+j}(x;t)\rangle_t
&=
\langle P_n(x;t)\cdot\Lambda(x),P_{n+m+j}(x;t)\rangle_t\\
&=
\langle P_n(x;t),M(t)\cdot P_{n+m+j}(x;t)\rangle_t
=
\mathcal{H}_n(t) G_{-m-j}(n+m+j;t)^\ast.
\end{aligned}
\end{equation*}
From the above equation we get
\begin{align*}
I_1(t)
=
\langle \dot{P}_n(x;t),P_{n+m}(x;t)\cdot\Lambda(x)\rangle_t
&=
\sum_{j=-k}^{-m-1}\langle \dot{P}_{n}(x;t),P_{n+m+j}(x;t)\rangle_t G_j(n+m;t)^\ast\\
&=\sum_{j=-k}^{-m-1} \mathcal{H}_n(t) G_{-m-j}(n+m+j;t)^\ast G_j(n+m;t)^\ast.
\end{align*}
Applying \eqref{eq:WeakPearson-H}, shifting the index of summation and restricting the index again, we obtain
\[
\begin{aligned}
I_1(t)
&= \sum_{j=-k}^{-m-1} G_{m+j}(n;t)\mathcal{H}_{n+m+j}(t) G_j(n+m;t)^\ast = \sum_{j=m-k}^{-1} G_{j}(n;t)\mathcal{H}_{n+j}G_{j-m}(n+m;t)^\ast\\
&=\sum_{j=\max\{-k+m,-k\}}^{-1}G_j(n;t)\mathcal{H}_{n+j}G_{j-m}(n+m;t)^\ast,
\end{aligned}
\]
By doing a similar calculation we get 
\begin{equation*}
    I_3(t)=\sum_{j=-k+m}^{m-1} G_j(n;t)\mathcal{H}_{n+j}(t)G_{j-m}(n+m;t)^\ast=\sum_{j=\max\{-k+m,-k\}}^{m-1}G_j(n;t)\mathcal{H}_{n+j}(t)G_{j-m}(n+m;t)^\ast.
\end{equation*}
After cancellation of terms in the sum $I_1(t)+I_2(t)+I_3(t)$, the right hand side of \eqref{eq:ddtGm} is equal to
$$
\sum_{j=\max\{-k+m,-k\}}^{\min\{-1,m-1\}}G_j(n;t)\mathcal{H}_{n+j}(t)G_{j-m}(n+m;t)^\ast
- \sum_{j=\max\{0,m\}}^{\min\{k+m,k\}}G_j(n;t)\mathcal{H}_{n+j}(t)G_{j-m}(n+m;t)^\ast.
$$
Applying \eqref{eq:WeakPearson-H} again, this is equal to
$$
\sum_{j=\max\{-k+m,-k\}}^{\min\{-1,m-1\}}G_j(n;t)G_{m-j}(n+j;t)\mathcal{H}_{n+m}(t) - \sum_{j=\max\{0,m\}}^{\min\{k+m,k\}}G_j(n;t)G_{m-j}(n+j;t)\mathcal{H}_{n+m}(t).
$$
On the other hand, for  \eqref{eq:ddtGm} we also have
$$
\begin{aligned}
\frac{d}{dt}(G_m(n;t)\mathcal{H}_{n+m}(t))
&=\dot{G}_m(n;t)\mathcal{H}_{n+m}(t)+G_m(n;t)\dot{\mathcal{H}}_{n+m}(t)\\
&=\dot{G}_m(n;t)\mathcal{H}_{n+m}(t)-G_m(n;t)G_0(n+m;t)\mathcal{H}_{n+m}(t),
\end{aligned}
$$
using \eqref{eq:diffHn}. Finally, multiplying by $\mathcal{H}_{n+m}(t)^{-1}$ from the right and rearranging terms, we obtain
\begin{multline*}
\dot{G}_m(n;t)=\sum_{j=\max\{-k+m,-k\}}^{\min\{-1,m-1\}}G_j(n;t)G_{m-j}(n+j;t) 
- \sum_{j=\max\{0,m\}}^{\min\{k+m,k\}}G_j(n;t)G_{m-j}(n+j;t)(t)\\+G_m(n;t)G_0(n+m;t).
\end{multline*}
This concludes the proof of the theorem. 
\end{proof}

Next we present the cases $k=1$ and $k=2$ explicitly.

\begin{corollary}[Toda-type lattice equations]
\label{cor:toda}
The case $k=1,$ i.e., $M$ of order three gives the Toda-type equations: 
\begin{align*}
   \dot{G}_0(n;t)&=G_{-1}(n;t)G_{1}(n-1;t)-G_1(n;t)G_{-1}(n+1;t), \\
   \dot{G}_{-1}(n;t)&=G_{-1}(n;t)G_0(n-1;t)-G_0(n;t)G_{-1}(n;t),\\
   \dot{G}_{1}(n;t)&=0.
\end{align*}
\end{corollary}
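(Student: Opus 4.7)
The plan is to obtain the three identities as direct specializations of Theorem \ref{thm:toda-type-eq} with $k=1$, so there is essentially no new content to prove; the only task is to read off the finite sums carefully.

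First I would apply the formula for negative $m$ with $m=-1$. The range of summation in the first sum collapses to $j=-1$, giving a single term $G_{-1}(n;t)G_{0}(n-1;t)$, while the second sum runs from $j=0$ to $j=0$, contributing $G_0(n;t)G_{-1}(n;t)$. Subtracting these yields exactly the stated equation for $\dot{G}_{-1}(n;t)$. Next, for $m=0$ I would use the formula for nonnegative $m$; the first sum runs over $j=-1$ alone and gives $G_{-1}(n;t)G_{1}(n-1;t)$, the second sum runs over $j=1$ alone and gives $G_{1}(n;t)G_{-1}(n+1;t)$, producing the stated equation for $\dot{G}_{0}(n;t)$. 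Finally for $m=k=1$, both index ranges in Theorem \ref{thm:toda-type-eq} are empty (the first ranges from $j=0$ to $j=-1$, the second from $j=2$ to $j=1$), so $\dot{G}_1(n;t)=0$; this is also a direct sanity check against Lemma \ref{lem:3}, which already asserts that $G_1(n;t)=\Lambda_1$ is constant in both $n$ and $t$.

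Since the corollary is a pure specialization, there is no genuine obstacle: the only thing to watch is getting the summation ranges right in Theorem \ref{thm:toda-type-eq} (in particular remembering to include the $j=m$ term correctly in the appropriate branch) so that the three equations come out with the right signs and shifted arguments. I would therefore present the proof as a short verification, simply writing out the two sums for each value of $m\in\{-1,0,1\}$ and noting the agreement of the $m=1$ case with Lemma \ref{lem:3}.
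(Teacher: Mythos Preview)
Your proposal is correct and matches the paper's own treatment: the corollary is stated immediately after Theorem~\ref{thm:toda-type-eq} without a separate proof, so it is understood as exactly the specialization $k=1$ that you carry out. Your bookkeeping of the summation ranges for $m=-1,0,1$ is accurate, and your remark that the $m=1$ case is consistent with Lemma~\ref{lem:3} is a nice touch.
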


As an example, we consider $\Lambda(x)=x,$ in such case the difference operator is the one corresponding to the three-term recurrence relation: 
$$\mathcal{L}(t)=\delta + B(n;t) +C(n;t)\delta^{-1},$$
i.e., $G_1(n;t)=1,\,\, G_0(n;t)=B(n;t),\,\, G_{-1}(n;t)=C(n;t),$
and we get
\begin{align*}
\dot{B}(n;t)&=C(n;t)-C(n+1;t),\\
\dot{C}(n;t)&=C(n;t)B(n-1;t)-B(n;t)C(n;t).
\end{align*}
We recover the standard (non-Abelian) Toda lattice equations for the recurrence coefficients. For more general $\Lambda(x)$, we obtain coefficients $G_0(n;t)$ and $G_{\pm 1}(n;t)$ that can be written in terms of $B(n;t)$ and $C(n;t)$.
\begin{corollary}
The case $k=2,$ i.e., $M$ of order five gives the equations 
\begin{align*}
   \dot{G}_0(n;t)&=G_{-2}(n;t)G_{2}(n-2;t)+G_{-1}(n;t)G_{1}(n-1;t) -G_1(n;t)G_{-1}(n+1;t)\\
   &-G_{2}(n;t)G_{-2}(n+2;t),\\
    \dot{G}_{-1}(n;t)&=G_{-2}(n;t)G_1(n-2;t)-G_0(n;t)G_{-1}(n;t)-G_{1}(n;t)G_{-2}(n+1;t)\\
    &+G_{-1}(n;t)G_0(n-1;t),\\
    \dot{G}_{1}(n;t)&=G_{-1}(n;t)G_2(n-1;t)-G_{2}(n;t)G_{-1}(n+2;t),\\
    \dot{G}_{-2}(n;t)&=G_{-2}(n;t)G_0(n-2;t)-G_{0}(n;t)G_{-2}(n;t),\\
    \dot{G}_{2}(n;t)&=0.
\end{align*}
As before, in this case the terms $G_{m}(n;t)$ can be written in terms of the coefficients $B(n;t)$ and $C(n;t)$ of the three-term recurrence relation for MVOPs given in \eqref{eq:three-term}, but we omit this calculation.

\end{corollary}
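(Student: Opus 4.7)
The plan is to derive this corollary as a direct specialization of Theorem \ref{thm:toda-type-eq} by setting $k=2$. The strategy is to substitute $k=2$ into each of the two formulas provided there and, for every value $m\in\{-2,-1,0,1,2\}$, expand the (finite) sums by enumerating the allowed indices.

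For the top coefficient $m=2$, I would apply the second formula of Theorem \ref{thm:toda-type-eq}, noting that both summation ranges $\sum_{j=0}^{-1}(\cdot)$ and $\sum_{j=3}^{2}(\cdot)$ are empty by convention. This gives $\dot{G}_2(n;t)=0$ at once, consistent with Lemma \ref{lem:3}. For $m=1$, the first sum of that formula collapses to the single index $j=-1$ and the second to $j=2$, producing the two-term identity $\dot{G}_1(n;t)=G_{-1}(n;t)G_2(n-1;t)-G_2(n;t)G_{-1}(n+2;t)$. The case $m=0$ is the richest: the first sum runs over $j\in\{-2,-1\}$ and the second over $j\in\{1,2\}$, which yields the four-term expression displayed in the statement. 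The negative cases $m=-1$ and $m=-2$ are treated analogously, this time using the first formula of Theorem \ref{thm:toda-type-eq}, with index ranges $\{-2,-1\}$ versus $\{0,1\}$ for $m=-1$, and the singleton ranges $\{-2\}$ versus $\{0\}$ for $m=-2$.

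The computation is entirely mechanical; the only point requiring real care is bookkeeping. For each $m$ one must check that the index ranges of the theorem already discard any term for which $j$ or $m-j$ falls outside $\{-k,\ldots,k\}$, so that no spurious $G_j$ with $|j|>k$ appears. With $k=2$ this restriction is automatic in every case above, and collecting the surviving terms reproduces exactly the five identities of the corollary.
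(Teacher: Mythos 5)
Your proposal is correct and is exactly the argument the paper intends: the corollary is a mechanical specialization of Theorem \ref{thm:toda-type-eq} to $k=2$, and your enumeration of the index ranges for each $m\in\{-2,-1,0,1,2\}$ reproduces all five identities (including the empty sums giving $\dot{G}_2=0$). Nothing further is needed.
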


\subsection{Lax pair}\label{subsec:Lax}
The time evolution equations for the coefficients $G_m(n;t)$ that we have obtained can also be written in the form of a Lax pair. Let us consider the block infinite matrices $L,$ $L^{+},$ where $i,j\in\mathbb{N}_0,$
$$L_{i,j}= \left\{ \begin{array}{lcc}
             0 &  |i-j|>k\\
             G_{j-i}(i;t) & |i-j|\leq k\\
             \end{array}
   \right. \qquad \qquad\qquad L^{+}_{i,j}=\left\{ \begin{array}{lcc}
             L_{i,j} &  i\leq j\\
            0 & i>j\\
             \end{array}
   \right.$$
\begin{theorem}[Lax pair] The following relation holds
$$\dot{L}=[L,L^+].$$
where $[L,L^+]=LL^+-L^+L.$
\end{theorem}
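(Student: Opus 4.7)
The plan is to verify the identity $\dot{L}=[L,L^+]$ entry by entry, by matching each band of the commutator against the formulas of Theorem \ref{thm:toda-type-eq}. Fix $i,j\in\N_0$ and set $m=j-i$. When $|m|>k$ both $L_{i,j}$ and $[L,L^+]_{i,j}$ vanish by the band structure of $L$, so it is enough to consider $|m|\leq k$; in that case $\dot{L}_{i,j}=\dot{G}_m(i;t)$, for which Theorem \ref{thm:toda-type-eq} gives explicit expressions.

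I would then expand $(LL^+)_{i,j}=\sum_\ell L_{i,\ell}L^+_{\ell,j}$ under the reindexing $\ell=i+p$. The entry $L_{i,\ell}$ equals $G_p(i;t)$ for $|p|\leq k$, while $L^+_{\ell,j}=G_{m-p}(i+p;t)$ requires both $p\leq m$ (upper triangularity) and $|m-p|\leq k$. An analogous computation for $(L^+L)_{i,j}$ produces the same bilinear terms $G_p(i;t)G_{m-p}(i+p;t)$, but with $p\geq 0$ replacing $p\leq m$. Taking the difference, the contributions in the overlap of the two index ranges cancel, and $[L,L^+]_{i,j}$ reduces to a signed sum over the complementary ranges.

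The case split of Theorem \ref{thm:toda-type-eq} then emerges naturally. For $m\geq 0$ the overlap is $\{0,\ldots,m\}$, which leaves
\begin{equation*}
[L,L^+]_{i,j}=\sum_{p=m-k}^{-1}G_p(i;t)G_{m-p}(i+p;t)-\sum_{p=m+1}^{k}G_p(i;t)G_{m-p}(i+p;t),
\end{equation*}
whereas for $m<0$ the two ranges are disjoint and one obtains
\begin{equation*}
[L,L^+]_{i,j}=\sum_{p=-k}^{m}G_p(i;t)G_{m-p}(i+p;t)-\sum_{p=0}^{m+k}G_p(i;t)G_{m-p}(i+p;t).
\end{equation*}
Relabeling $p\mapsto j$ reproduces exactly the two formulas for $\dot{G}_m(i;t)$ in Theorem \ref{thm:toda-type-eq}. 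The main, and essentially only, obstacle is the combinatorial bookkeeping: one must track the three simultaneous constraints $|p|\leq k$, $|m-p|\leq k$, and the triangularity inequality, and verify that the boundary case $m=0$ (where the two overlap sums meet at $p=0$) still yields the correct expression for $\dot{G}_0(n;t)$ recorded in Corollary \ref{cor:toda}.
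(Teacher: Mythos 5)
Your proposal is correct and follows essentially the same route as the paper: expand $(LL^+)_{n,n+m}$ and $(L^+L)_{n,n+m}$ entrywise, track the band and triangularity constraints on the summation index, and match the resulting signed sums against the two cases of Theorem \ref{thm:toda-type-eq} (with the convention $G_j(n;t)=0$ for $n<0$ handling the boundary rows). Your explicit cancellation of the overlap range $\{0,\ldots,m\}$ for $m\geq 0$ is a slightly more careful presentation of the step the paper leaves implicit, but it is the same argument.
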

\begin{proof}
Let us compute 
\begin{multline*}
(LL^{+})_{n,n+m}=\sum_{s=0}^\infty L_{n,s}L^{+}_{s,n+m}=\sum_{r=-n}^\infty L_{n,r+n}L^{+}_{r+n,n+m}=\sum_{r=\max\{-n,-k\}}^{\min\{k,m\}}L_{n,r+n}L^{+}_{r+n,n+m}\\
=\sum_{r=\max\{-n,-k\}}^{\min\{k,m\}}G_r(n;t)G_{m-r}(r+n;t) = \sum_{r=\max\{-n,-k,m-k\}}^{\min\{k,k+m,m\}} G_r(n;t)G_{m-r}(r+n;t).
\end{multline*}
Similarly we get 
$$(L^{+}L)_{n,n+m}=\sum_{r=0}^{\min\{k,k+m\}} G_r(n;t)G_{m-r}(r+n;t).$$
Computing the bracket we get the desired result:
\begin{multline*}
[L,L^{+}]_{n,n+m}=(LL^{+})_{n,n+m}-(L^{+}L)_{n,n+m}\\
=\sum_{r=\max\{-n,-k,m-k\}}^{\min\{k,k+m,m\}} G_r(n;t)G_{m-r}(r+n;t)-\sum_{r=0}^{\min\{k,k+m\}} G_r(n;t)G_{m-r}(r+n;t).
\end{multline*}
Therefore we have
\begin{align*}
[L,L^{+}]_{n,n+m}&=\sum_{r=-k}^{m} G_r(n;t)G_{m-r}(r+n;t)-\sum_{r=0}^{k+m} G_r(n;t)G_{m-r}(r+n;t), \quad
m=-k,\ldots,-1, \\
[L,L^{+}]_{n,n+m}&=\sum_{r=m-k}^{m} G_r(n;t)G_{m-r}(r+n;t)-\sum_{r=0}^{k} G_r(n;t)G_{m-r}(r+n;t), \quad
m=0,\ldots,k.
\end{align*}
Here we assume that the sequence $G_j(n;t)=0$ for negative values of $n$. It now follows that $[L,L^{+}]_{n,n+m}$ coincides with the right hand side of the time evolution equations in Theorem \ref{thm:toda-type-eq}. This completes the proof
of the theorem.
\end{proof}

We note that this result is a block version of the scalar case given in \eqref{eq:JBtilde} and it also generalises several known cases in the literature: we can consider deformation with any scalar monomial $\Lambda(x)=x^k$, for $k\geq 1$, which leads to the Toda hierarchy as explained in \cite[Section 2.3]{WalterOPsToda}, and also with $\Lambda(x)$ formed of linear combinations of different powers of $x$. Moreover, the result in this section includes the case of matrix-valued $\Lambda(x)\in\mathcal{S}(W)$, see 
\eqref{eq:defS}.

\section{Polynomial Toda equations}\label{sec:polToda}

In the previous section, we considered the general case of a difference operator $M(t)$ with $2k+1$ terms, see \eqref{eq:M-2k+1}, which comes from a polynomial $\Lambda(x)$ of $k$ degree of the form given in \eqref{eq:Lambdak}. We observe that such form can also be obtained as a composition $v(\Lambda(x);t)$, with $\Lambda(x)$ of degree one and $v(x;t)$ a polynomial of degree $k$ in $x$. In this case, apart from $M(t)$, there exists another difference operator $M_{\Lambda}(t)$, which is associated with $\Lambda(x)$ and has three terms. In this section we study time evolution of the coefficients of this difference operator, with respect to an extra real parameter $t$.

Let $\Lambda(x)\in\mathcal{S}(W)$ be     a polynomial of degree one and $v$ a polynomial of degree $k$ in $x$ which depends smoothly in a parameter $t$ such that 
$$H(x;t)=e^{-v(\Lambda(x);t)}W(x),$$
has finite moments of all order. Let $P^H_n(x;t)$ be the monic orthogonal polynomials with respect to $H$. We observe that both $\Lambda(x), v(\Lambda(x);t)\in\mathcal{S}(H(x;t)),$ and then 
\[
P_n^H \cdot\Lambda(x)=M_\Lambda(t)\cdot P_n^H
\quad 
\Longrightarrow 
\quad
P_n^H \cdot v(\Lambda(x);t)=v(M_{\Lambda};t)\cdot P_n^H,
\]
where 
\begin{equation}
\label{eq:Mlambda}
M_\Lambda(t)=K_1(n;t)\delta+K_0(n;t)+K_{-1}(n;t)\delta^{-1},
\end{equation}
and $v(M_{\Lambda};t)$ has $2k+1$ terms, which can be written as combinations of $K_{\pm 1}(n;t)$ and $K_0(n;t)$. We observe that the coefficient $K_1(n;t)$ is in fact  independent of $t$ and $n$ by comparing leading coefficients as in the proof of Lemma $\ref{lem:3}$, but we keep the notation for consistency.

Following \cite[Remark 3]{DER}, given the difference operator $M_\Lambda$ corresponding to the polynomial of degree one $\Lambda$, and any polynomial $q$, 
we denote by $q(M_\Lambda;t)_{\ell}(n;t)$ the coefficient of $q(M_\Lambda;t)$ of order $\ell$ in $\delta$:
\begin{equation}
\label{rmk:notation}
q(M_\Lambda;t)=\sum_{j=-\deg q}^{\deg q} q(M_\Lambda;t)_{j}(n;t) \, \delta^j.
\end{equation}
As in \cite{DER} the coefficient $q(M_\Lambda;t)_{\ell}(n;t)$ can be carried out following the scheme shown in Figure \ref{fig:pLj}: $q(M_\Lambda;t)_{\ell}(n;t)$ is equal to the sum over all possible paths from $P^H_n(x;t)$ to $P^H_{n+\ell}(x;t)$ in $\deg q$ steps, where in each path we multiply the coefficients corresponding to each arrow.
\begin{figure}[!h]
\begin{center}
\begin{tikzcd}[row sep=normal, column sep=50pt]
\cdots	\arrow[shift left]{r}{K_{-1}(n+2;t)}
& P^H_{n+1}(x;t)  	\arrow[shift left]{r}{K_{-1}(n+1;t)}
		\arrow[shift left]{l}{K_{1}(n+1;t)}
	         \arrow[out=120, in=60, loop]{r}{K_{0}(n+1;t)}
& P^H_{n}(x;t)
		\arrow[shift left]{l}{K_{1}(n;t)}
		\arrow[shift left]{r}{K_{-1}(n;t)}
	         \arrow[out=120, in=60, loop]{r}{K_{0}(n;t)}
& P^H_{n-1}(x;t) \arrow[shift left]{l}{K_{1}(n-1;t)}
		\arrow[shift left]{r}{K_{-1}(n-1;t)}
	         \arrow[out=120, in=60, loop]{r}{K_{0}(n-1;t)}
& \cdots 	 \arrow[shift left]{l}{K_{1}(n-2;t)}
\end{tikzcd}
\caption{Scheme for the calculation of $q(M_\Lambda;t)_{\ell}(n;t)$.}
\label{fig:pLj}
\end{center}
\end{figure}
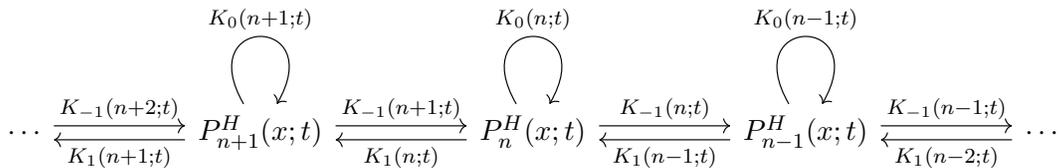

Before we present the results on deformation of the coefficients $K_0(n;t), K_{\pm 1}(n;t)$, we need a lemma on deformation of the orthogonal polynomials themselves. 

\begin{lem}\label{lem12} We have the following $t$ evolution of the orthogonal polynomials:
$$\dot{P}_n^H(x;t)=\sum_{m=0}^{n-1}\dot{v}(M_\Lambda;t)_{m-n}(n;t)P^H_m(x;t)$$
\end{lem}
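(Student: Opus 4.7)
The plan is to expand $\dot{P}_n^H(x;t)$ in the basis of monic orthogonal polynomials and identify the coefficients by differentiating the orthogonality relation \eqref{eq:MVOPs_t} with respect to $t$. Since $P_n^H$ is monic of degree $n$, its time derivative has degree at most $n-1$, so there exist unique matrix coefficients $c_{n,m}(t)$ with
\[
\dot{P}_n^H(x;t)=\sum_{m=0}^{n-1}c_{n,m}(t)\,P_m^H(x;t),
\]
and these are isolated by $c_{n,m}(t)\,\mathcal{H}_m(t)=\langle \dot P_n^H, P_m^H\rangle_t$. The whole exercise then reduces to computing this inner product.

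Next, for $m<n$ I would differentiate the identity $\langle P_n^H, P_m^H\rangle_t=0$ in $t$. One of the three resulting terms, $\langle P_n^H, \dot P_m^H\rangle_t$, vanishes because $\dot P_m^H$ lies in the span of $P_0^H,\ldots,P_{m-1}^H$, each of which is orthogonal to $P_n^H$. The middle term requires the identity $\partial_t H(x;t)=-\dot v(\Lambda(x);t)\,H(x;t)$. This is the only place where matrix-valuedness of $\Lambda$ could cause trouble: one must justify that the standard scalar rule for differentiating the exponential applies. The key observation is that $v(\Lambda(x);t)$ and $\dot v(\Lambda(x);t)$ are both polynomials in the same matrix $\Lambda(x)$, hence commute, so the Duhamel-type correction vanishes and the scalar formula goes through. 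This yields
\[
\langle \dot P_n^H, P_m^H\rangle_t = \langle P_n^H\cdot\dot v(\Lambda;t), P_m^H\rangle_t.
\]

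Finally, because $\Lambda(x)\in \mathcal{S}(H(x;t))$, every real polynomial in $\Lambda(x)$ also lies in $\mathcal{S}(H(x;t))\subseteq \mathcal{F}_R(P^H;t)$; in particular $\dot v(\Lambda(x);t)\in \mathcal{F}_R(P^H;t)$, and by the Fourier isomorphism the associated left difference operator is precisely $\dot v(M_\Lambda;t)$, so that $P_n^H\cdot\dot v(\Lambda;t)=\dot v(M_\Lambda;t)\cdot P_n^H$. Expanding $\dot v(M_\Lambda;t)$ as in \eqref{rmk:notation} and applying orthogonality once more singles out the shift by $m-n$, producing
\[
\langle \dot v(M_\Lambda;t)\cdot P_n^H, P_m^H\rangle_t = \dot v(M_\Lambda;t)_{m-n}(n;t)\,\mathcal{H}_m(t),
\]
so $c_{n,m}(t)=\dot v(M_\Lambda;t)_{m-n}(n;t)$, which is the claim. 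Beyond the commutativity point above, the argument is purely bookkeeping with orthogonality, so the matrix-exponential derivative is really the only nontrivial check.
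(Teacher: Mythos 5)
Your proposal is correct and follows essentially the same route as the paper: differentiate the orthogonality relation $\langle P_n^H,P_m^H\rangle_t=0$ for $m<n$, kill the $\langle P_n^H,\dot P_m^H\rangle_t$ term by degree count, convert $P_n^H\cdot\dot v(\Lambda;t)$ into $\dot v(M_\Lambda;t)\cdot P_n^H$ via the Fourier map, and read off the coefficients from the orthogonal expansion of the degree-$(n-1)$ polynomial $\dot P_n^H$. Your explicit justification that $v(\Lambda(x);t)$ and $\dot v(\Lambda(x);t)$ commute, so the scalar rule for differentiating $e^{-v(\Lambda(x);t)}$ applies, is a detail the paper leaves implicit but is a worthwhile check.
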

\begin{proof}
Observe that if $n>m,$
\begin{multline*}
0=\frac{d}{dt}\langle P^H_n(x;t), P^H_m(x;t)\rangle_t =
\langle \dot{P}^H_n(x;t), P^H_m(x;t)\rangle_t - \langle P^H_n(x;t)\cdot\dot{v}(\Lambda(x);t), P^H_m(x;t)\rangle_t\\
+ \langle P^H_n(x;t), \dot{P}^H_m(x;t)\rangle_t.    
\end{multline*}
Since $n>m$ the last term of the previous equation is zero and
\[\langle \dot{P}^H_n(x;t), P^H_m(x;t)\rangle_t= \langle P^H_n(x;t)\cdot\dot{v}(\Lambda(x);t), P^H_m(x;t)\rangle_t.\]
Additionally $\dot{P}_n^H(x;t)$ is a polynomial of degree $n-1,$ then by orthogonality
\[\begin{aligned}
\dot{P}_n^H(x;t)&=\sum_{m=0}^{n-1}\langle\dot{P}_n^H(x;t), P^H_m(x;t)\rangle_t \mathcal{H}_m(t)^{-1}P^H_m(x;t)\\
&=\sum_{m=0}^{n-1}\langle P_n^H(x;t)\cdot\dot{v}(\Lambda(x);t), P^H_m(x;t)\rangle_t \mathcal{H}_m(t)^{-1}P^H_m(x;t)\\
&=\sum_{m=0}^{n-1}\langle \dot{v}(M_\Lambda;t)\cdot P_n^H(x;t), P^H_m(x;t)\rangle_t \mathcal{H}_m(t)^{-1}P^H_m(x;t)\\
&=\sum_{m=0}^{n-1} \dot{v}(M_\Lambda;t)_{m-n}(n;t)P^H_m(x;t).
\end{aligned}
\]
This concludes the proof of the lemma.
\end{proof}

\begin{rmk}
In the case of the Toda lattice equations in Corollary \ref{cor:toda}, we have $v(x;t)=tx$. Hence $v(\Lambda(x);t)=t\Lambda(x)$, and $\dot{v}=\Lambda(x)$, then $k=1$, and the only non-zero term corresponds to $m=n-1$, which leads to
$$\dot{P}_n^H(x;t)=\sum_{m=0}^{n-1} K_{m-n}(n;t)P^H_m(x;t)=K_{-1}(n;t)P^H_{n-1}(x;t).$$
If additionally $\Lambda(x)=x$, then $M_\Lambda$ is the operator related to the three-term recurrence relation and
$$\dot{P}_n^H(x;t)=C(n;t)P^H_{n-1}(x;t).$$

This is compatible with the well known result of Toda deformation in the scalar case, see for instance \cite[Lemma 2.1]{WalterOPsToda}.
\end{rmk}

As in the previous section, we present time evolution equations for the coefficients $K_0(n;t)$ and $K_{\pm 1}(n;t)$ of the difference operator.
\begin{theorem}
\label{thm:K-dotv}
The following time evolution equations hold:
\[\begin{aligned}
\dot{K}_0(n;t)&=\dot v(M_\Lambda;t)_{-1}(n;t)K_1(n-1;t)-K_1(n;t)\dot{v}(M_\Lambda;t)_{-1}(n+1;t)\\
  \dot{K}_{-1}(n;t)&=\dot{v}(M_\Lambda;t)_{-2}(n;t)K_1(n-2;t)+\dot{v}(M_\Lambda;t)_{-1}(n;t)K_0(n-1;t)\\
  &\hspace{4cm}
   -K_1(n;t)\dot{v}(M_\Lambda;t)_{-2}(n+1;t) -K_0(n;t)\dot{v}(M_\Lambda;t)_{-1}(n;t)
\end{aligned}
\]
\end{theorem}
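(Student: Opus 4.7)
The plan is to mirror the proof of Theorem \ref{thm:toda-type-eq}, with the essential modification that the $t$-derivative of the deformed weight is now
\[ \dot H(x;t) = -\dot v(\Lambda(x);t)\,H(x;t), \]
which is valid because $v(\Lambda;t)$ and $\dot v(\Lambda;t)$ commute (both being polynomials in $\Lambda(x)$).

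I would start from the identity
\[ K_\ell(n;t)\,\mathcal{H}_{n+\ell}(t) \;=\; \langle M_\Lambda(t)\cdot P_n^H, P_{n+\ell}^H\rangle_t \;=\; \langle P_n^H \cdot \Lambda, P_{n+\ell}^H\rangle_t, \qquad \ell \in \{-1,0,1\}, \]
derived exactly as the lemma preceding Theorem \ref{thm:toda-type-eq}. Differentiating in $t$ produces three pieces
\[ I_1 = \langle \dot P_n^H \cdot \Lambda, P_{n+\ell}^H\rangle_t,\quad I_2 = -\langle P_n^H \cdot \Lambda\,\dot v(\Lambda;t), P_{n+\ell}^H\rangle_t,\quad I_3 = \langle P_n^H \cdot \Lambda, \dot P_{n+\ell}^H\rangle_t. \]
Lemma \ref{lem12} lets me expand $\dot P_n^H$ and $\dot P_{n+\ell}^H$ in the basis $(P_m^H)_{m\geq 0}$; orthogonality combined with the three-term structure of $M_\Lambda$ then restricts the sums in $I_1$ and $I_3$ to at most three summands each. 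Rewriting $I_2$ as $-\langle M_\Lambda\,\dot v(M_\Lambda;t)\cdot P_n^H, P_{n+\ell}^H\rangle_t$ and expanding the composition as a difference operator trims this term similarly.

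Next, I would invoke a weak Pearson identity in the form of \eqref{eq:WeakPearson-H}, valid for both $M_\Lambda(t)$ and $\dot v(M_\Lambda;t)$ since both are $\dagger$-symmetric polynomials in $\Lambda$, to convert adjoint factors into unadjointed ones and present every surviving summand with a common right factor $\mathcal{H}_{n+\ell}(t)$. On the left-hand side, the analog of \eqref{eq:diffHn},
\[ \dot{\mathcal{H}}_n(t) = -\dot v(M_\Lambda;t)_0(n;t)\,\mathcal{H}_n(t), \]
obtained by differentiating $\mathcal{H}_n(t)=\langle P_n^H, P_n^H\rangle_t$ and using that $\dot P_n^H$ has degree less than $n$, lets me write
\[ \tfrac{d}{dt}\bigl(K_\ell(n;t)\mathcal{H}_{n+\ell}(t)\bigr) = \dot K_\ell(n;t)\mathcal{H}_{n+\ell}(t) - K_\ell(n;t)\,\dot v(M_\Lambda;t)_0(n+\ell;t)\,\mathcal{H}_{n+\ell}(t). \]
The term $K_\ell\,\dot v(M_\Lambda;t)_0\,\mathcal{H}_{n+\ell}$ cancels identically between the two sides, and right-multiplication by $\mathcal{H}_{n+\ell}(t)^{-1}$ yields the stated formulas for $\ell = 0$ and $\ell = -1$. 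The case $\ell = 1$ collapses to $\dot K_1 = 0$, consistent with $K_1$ being independent of $n$ and $t$.

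The main obstacle is the precise bookkeeping of summation ranges and index shifts. After Lemma \ref{lem12}, the sum in $I_1$ is indexed by $0 \le m \le n-1$ while orthogonality forces $m \in \{n+\ell-1, n+\ell, n+\ell+1\}$, and similarly for $I_3$; for $\ell = -1$ this makes $I_3$ vanish entirely, whereas for $\ell = 0$ a subtle cancellation between the surviving $K_{-1}\,\dot v(M_\Lambda;t)_1(n-1;t)$ contribution from $I_3$ (after applying Pearson) and the corresponding contribution in $I_2$ is exactly what produces the compact two-term right-hand side for $\dot K_0$. As a sanity check, specializing to $\Lambda(x)=x$ and $v(x;t)=tx$ (so $\dot v(M_\Lambda;t)=M_\Lambda$) recovers the non-Abelian Toda equations $\dot B(n;t)=C(n;t)-C(n+1;t)$ and $\dot C(n;t)=C(n;t)B(n-1;t)-B(n;t)C(n;t)$ displayed after Corollary \ref{cor:toda}.
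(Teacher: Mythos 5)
Your proposal is correct, and I have checked that carrying out the bookkeeping as you describe does reproduce both displayed equations (and $\dot K_1=0$ for $\ell=1$). However, it follows a genuinely different route from the paper. The paper's proof differentiates the intertwining relation $M_\Lambda(t)\cdot P_n^H = P_n^H\cdot\Lambda$ directly, expands every $\dot P^H$ via Lemma \ref{lem12}, and simply compares the coefficients of $P_n^H$ and $P_{n-1}^H$ in the resulting identity between polynomial expansions; no inner products, no adjoints, no normalization matrices $\mathcal{H}_n$ appear. You instead transplant the strategy of Theorem \ref{thm:toda-type-eq}: differentiate $K_\ell(n;t)\mathcal{H}_{n+\ell}(t)=\langle P_n^H\cdot\Lambda,P_{n+\ell}^H\rangle_t$, split into $I_1,I_2,I_3$, and use a weak Pearson relation together with $\dot{\mathcal{H}}_n=-\dot v(M_\Lambda;t)_0(n;t)\mathcal{H}_n$. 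This is a valid and pleasantly uniform treatment of Sections 3 and 4, but it costs you two auxiliary facts the paper never states in this setting and which you should flag as needing proof: that \eqref{eq:WeakPearson-H} holds for the coefficients of $M_\Lambda(t)$ and of $\dot v(M_\Lambda;t)$ (true, since $\Lambda(x)$ and $\dot v(\Lambda(x);t)$ lie in $\mathcal{S}(H(x;t))$ — here it matters that $\dot v$ has real coefficients — so the argument of the weak Pearson proposition applies verbatim), and the analogue of \eqref{eq:diffHn} with $G_0$ replaced by $\dot v(M_\Lambda;t)_0$. Your cancellation claims check out: for $\ell=0$ the $K_{-1}(n;t)\mathcal{H}_{n-1}(t)\dot v(M_\Lambda;t)_{-1}(n;t)^\ast$ terms from $I_3$ and $I_2$ cancel, for $\ell=-1$ indeed $I_3=0$, and the $K_\ell\,\dot v(M_\Lambda;t)_0(n+\ell;t)\,\mathcal{H}_{n+\ell}$ term cancels against the $j=\ell$ contribution to $I_2$. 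The paper's route is shorter here precisely because the operator $M_\Lambda$ has only three terms, so coefficient comparison is painless; your route generalizes more mechanically but carries more index gymnastics.
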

\begin{proof}
We take $\frac{d}{dt}$ on both sides of 
$M_\Lambda(t)\cdot P_n^H(x;t)=P_{n}^H(x;t)\cdot \Lambda(x),$
and then the right hand side is equal to
\begin{align*}
\dot{P}_n^H(x;t)\cdot\Lambda(x)&=\sum_{m=0}^{n-1}\dot{v}(M_\Lambda;t)_{m-n}(n;t)P^H_m(x;t)\cdot\Lambda(x)\\
    &=\sum_{m=0}^{n-1}\dot{v}(M_\Lambda;t)_{m-n}(n;t)(K_1(m;t)P^H_{m+1}(x;t)+K_0(m;t)P^H_m(x;t)\\
    &\hspace{8cm}+K_{-1}(m;t)P^H_{m-1}(x;t)).
\end{align*}
Using Lemma \ref{lem12}, the left hand side is equal to 
\begin{multline*}
K_1(n;t)\dot{P}^H_{n+1}(x;t)+\dot{K}_0(n;t)P^H_{n}(x;t)+K_0(n;t)\dot{P}^H_{n}(x;t)+\dot{K}_{-1}(n;t)P^H_{n-1}(x;t)+K_{-1}(n;t)\dot{P}^H_{n-1}(x;t)\\
=\dot{K}_0(n;t)P^H_{n}(x;t)+\dot{K}_{-1}(n;t)P^H_{n-1}(x;t)+K_1(n;t)\sum_{m=0}^{n}\dot{v}(M_\Lambda;t)_{m-n-1}(n+1;t)P^H_m(x;t)\\
+K_0(n;t)\sum_{m=0}^{n-1}\dot{v}(M_\Lambda;t)_{m-n}(n;t)P^H_m(x;t)+K_{-1}(n;t)\sum_{m=0}^{n-2}\dot{v}(M_\Lambda;t)_{m-n+1}(n-1;t)P^H_m(x;t).
\end{multline*}
Comparing coefficients of $P_n^H$  on both sides, we get
\[\dot{K}_0(n;t)=\dot v(M_\Lambda;t)_{-1}(n;t)K_1(n-1;t)-K_1(n;t)\dot{v}(M_\Lambda;t)_{-1}(n+1;t),\]
and by comparison of coefficients again, we arrive at
\begin{multline*}
    \dot{K}_{-1}(n;t)=\dot{v}(M_\Lambda;t)_{-2}(n;t)K_1(n-2;t)+\dot{v}(M_\Lambda;t)_{-1}(n;t)K_0(n-1;t)\\-K_1(n;t)\dot{v}(M_\Lambda;t)_{-2}(n+1;t)
    -K_0(n;t)\dot{v}(M_\Lambda;t)_{-1}(n;t).
\end{multline*}
This concludes the proof of the theorem.
\end{proof}


\begin{corollary}[Langmuir-type equations]
For $v(x;t)=tx^2$ we get
\begin{multline*}
\dot{K}_0(n;t)=(K_0(n;t)K_{-1}(n;t)+K_{-1}(n;t)K_0(n-1;t))K_1(n-1;t)\\-K_{1}(n;t)(K_0(n+1;t)K_{-1}(n+1;t)+K_{-1}(n+1;t)K_0(n;t)),
\end{multline*}
\begin{multline*}
\dot{K}_{-1}(n;t)=K_{-1}(n;t)K_{-1}(n-1;t)K_1(n-2;t)-K_1(n;t) K_{-1}(n+1;t)K_{-1}(n;t)\\
+K_{-1}(n;t)K_0(n-1;t)^2-K_0(n;t)^2K_{-1}(n;t).
\end{multline*}
\end{corollary}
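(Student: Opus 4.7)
The plan is to apply Theorem \ref{thm:K-dotv} directly, specialised to $v(x;t)=tx^2$. For this choice, $\dot v(x;t)=x^2$ and hence $\dot v(\Lambda(x);t)=\Lambda(x)^2$, which by the construction of $M_\Lambda$ translates into $\dot v(M_\Lambda;t)=M_\Lambda^2$. Everything then reduces to identifying the coefficients of $\delta^{-1}$ and $\delta^{-2}$ in $M_\Lambda^2$, and substituting them into the two evolution equations provided by the theorem.

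First, I would compute $M_\Lambda^2$. Since $M_\Lambda=K_1(n;t)\delta+K_0(n;t)+K_{-1}(n;t)\delta^{-1}$ has three terms, its square has at most five terms ranging from $\delta^{-2}$ to $\delta^2$. To avoid sign and ordering mistakes arising from the fact that the coefficients $K_j$ do not commute and depend on $n$, I would use the path interpretation of Figure \ref{fig:pLj}: the coefficient of $\delta^\ell$ in $M_\Lambda^2$ is the sum, over all $2$-step paths from $P^H_n$ to $P^H_{n+\ell}$, of the products of the labels (taken in the order in which the operator acts, outermost first). Enumerating the paths $n\to n\to n-1$ and $n\to n-1\to n-1$ for $\ell=-1$, and the unique path $n\to n-1\to n-2$ for $\ell=-2$, yields
\begin{align*}
\dot v(M_\Lambda;t)_{-1}(n;t)&=K_0(n;t)K_{-1}(n;t)+K_{-1}(n;t)K_0(n-1;t),\\
\dot v(M_\Lambda;t)_{-2}(n;t)&=K_{-1}(n;t)K_{-1}(n-1;t).
\end{align*}

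Substituting the first identity into the formula for $\dot K_0(n;t)$ in Theorem \ref{thm:K-dotv} immediately gives the first Langmuir-type equation. For $\dot K_{-1}(n;t)$, I would substitute both identities, expand the four resulting terms, and observe that the mixed contributions $K_0(n;t)K_{-1}(n;t)K_0(n-1;t)$ coming from the second and fourth summands cancel exactly, leaving precisely the stated expression with $K_{-1}(n;t)K_0(n-1;t)^2-K_0(n;t)^2K_{-1}(n;t)$ as the surviving quadratic-in-$K_0$ part.

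No serious obstacle is expected: the entire argument is a specialisation of Theorem \ref{thm:K-dotv} and a short combinatorial calculation of two coefficients of $M_\Lambda^2$. The only point that requires care is the ordering of the non-commuting factors $K_j$, which is dictated by the left action of $M_\Lambda$ on the polynomial sequence and is tracked correctly by reading each path in Figure \ref{fig:pLj} in the order the operator is applied.
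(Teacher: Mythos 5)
Your proposal is correct and follows essentially the same route as the paper: specialise Theorem \ref{thm:K-dotv} to $v(x;t)=tx^2$, compute the $\delta^{-1}$ and $\delta^{-2}$ coefficients of $\dot v(M_\Lambda;t)=M_\Lambda^2$ (via the path scheme of Figure \ref{fig:pLj}, which is exactly how the paper obtains them), and substitute. The coefficient formulas and the cancellation of the two $K_0(n;t)K_{-1}(n;t)K_0(n-1;t)$ terms both check out.
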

\begin{proof}
The proof is a direct consequence of Theorem \ref{thm:K-dotv}. First we note that $v(\Lambda(x);t)=t\Lambda(x)^2$ and hence $\dot{v}(\Lambda(x);t)=\Lambda(x)^2$. From the expression of the corresponding $M_\Lambda$ in \eqref{eq:Mlambda}, we get
\begin{align*}
\dot{v}(M_\Lambda;t)&=\dot{v}(M_\Lambda;t)_{2}(n)\delta^2+\dot{v}(M_\Lambda;t)_{1}(n)\delta+\dot{v}(M_\Lambda;t)_0(n)+\dot{v}(M_\Lambda;t)_{-1}(n)\delta^{-1}+\dot{v}(M_\Lambda;t)_{-2}(n)\delta^{-2},
\end{align*}
where the coefficients on $\dot{v}(M_\Lambda;t)$ can be written in terms of $K_1,\,\,K_0,\,\,K_{-1}$ as:
\begin{align*}
    \dot{v}(M_\Lambda;t)_2(n)&=K_1(n;t)K_1(n+1;t),\\
    \dot{v}(M_\Lambda;t)_1(n)&=K_1(n;t)K_0(n+1;t)+K_0(n;t)K_1(n;t),\\
    \dot{v}(M_\Lambda;t)_0(n)&=K_1(n-1;t)K_{-1}(n+1;t)+K_0(n)^2+K_{-1}(n;t)K_1(n-1;t),\\
    \dot{v}(M_\Lambda;t)_{-1}(n)&=K_0(n;t)K_{-1}(n;t)+K_{-1}(n;t)K_0(n-1;t),\\
   \dot{v}(M_\Lambda;t)_{-2}(n)&=K_{-1}(n;t)K_{-1}(n-1;t).
\end{align*}Now the corollary follows from Theorem \ref{thm:K-dotv}.
\end{proof}

\section{Deformed Hermite type polynomials}\label{sec:Hermite}
In this section, we consider the $N\times N$ Hermite-type weight matrix $W$ supported on $\mathbb{R}$ studied in \cite{IKR2}:
\begin{equation} 
\label{eq:ourweight}
W^{(a)}(x) = e^{-x^2} e^{xA}e^{xA^\ast}, \qquad A_{j,k}
=
\begin{cases}
a_k & k = j-1, \\
0 & \textrm{otherwise}.
\end{cases}
\end{equation}
where $a$ is vector of parameters $a=(a_j)_{j=1}^{N-1}$. In this example the indices of the matrices are  labeled by $j,k \in \{1,\ldots N\}$. We denote by $(P_n^{(a)})_n$ the sequence of monic orthogonal polynomials with respect to $W^{(a)}$. The right Fourier algebra $\mathcal{F}_R(P^{(a)})$ contains four relevant differential operators, namely the operator of order zero given by the multiplication by $x$, and
$$D^{(a)}=\frac{d}{dx}+A, \qquad (D^{(a)})^\dagger=-D^{(a)}+2x, \qquad \mathcal{D}^{(a)} = \frac{d^2}{dx^2} + \frac{d}{dx} (2A-2x) + A^2-2J,$$
see \cite{DER}. We note that the expressions in this paper differ from those in \cite{DER} by a conjugation with a constant matrix $L(0)$. The operators $D^{(a)}$ and $(D^{(a)})^\dagger$ are mutually adjoint and $\mathcal{D}^{(a)}$ is a symmetric operator. The
Lie algebra $\mathfrak{g}$ generated by $D^{(a)}$, $(D^{(a)})^\dagger$, $\mathcal{D}^{(a)}$, $x$ and the identity matrix is a four dimensional Lie algebra which is the harmonic oscillator algebra \cite[Proposition 4]{DER}. The Casimir operator for this Lie algebra is the polynomial of degree one
\begin{equation}
\label{eq:Casimir-Hermite}
\mathcal{C}^{(a)}(x) = \mathcal{D}^{(a)}-\frac12 (D^{(a)})^\dagger D^{(a)} =  J - xA,\qquad J_{ii} = i, \qquad J_{ij}=0, \,\, i\neq j.
\end{equation}
The fundamental property of the Casimir operator $\mathcal{C}$ is that it commutes with $D^{(a)}$, $(D^{(a)})^\dagger$, $\mathcal{D}^{(a)}$ and $x$. The recurrence relation for the monic matrix valued orthogonal polynomials is given by
$$xP^{(a)}_n(x) = P^{(a)}_{n+1}(x) + B^{(a)}(n) P^{(a)}_{n}(x) + C^{(a)}(n) P^{(a)}_{n-1}(x).$$
If the parameters $a_k$ satisfy a special relation, the coefficients $B^{(a)}(n)$ and $C^{(a)}(n)$ have simple expressions \cite[Proposition 3.16]{IKR2}. 

\subsection{The deformed weight}
From \eqref{eq:Casimir-Hermite} we obtain that $\mathcal{C}^{(a)}(x)$ is a symmetric operator and, hence $\mathcal{C}^{(a)}(x) \in \mathcal{S}(W)$. We can therefore consider a deformation of the weight $W$ with the Casimir operator
\begin{equation}\label{eq:WtCasimir}
    W(x;t)=e^{-t\mathcal{C}^{(a)}(x)}W^{(a)}(x).
\end{equation}

From \cite[(3.6)]{IKR}, we obtain $e^{xA}J e^{-xA}=\mathcal{C}^{(a)}(x)$ and then $e^{xA}e^{-tJ}e^{-xA} = e^{-t\mathcal{C}^{(a)}(x)}$. From this we get the following expression of the deformed weight:
\begin{equation}
    \label{eq:Wxt-tinJ}
    W(x;t) = e^{-x^2}e^{xA}e^{-tJ}e^{xA^\ast}.
\end{equation}
In \eqref{eq:Wxt-tinJ}, the $t$--dependence is only in the diagonal constant factor $e^{-tJ}$. On the other hand, a straightforward calculation gives another factorization of $W(x;t)$:
\begin{equation}
\label{eq:Wxt-W}
W(x;t)=e^{-\frac{t}{2}J} W^{(e^\frac{t}{2}a)}(x)e^{-\frac{t}{2}J}.
\end{equation}
From the expression \eqref{eq:Wxt-W}, we obtain the deformed monic orthogonal polynomials with respect to $W(x;t)$ in terms of the monic orthogonal polynomials with respect to the weight $W^{(e^{\frac{t}{2}}a)}(x)$:
\begin{equation}
\label{eq:Pnt-in-term-Pn}
P_n(x;t) = e^{-\frac{t}{2}J} P_n^{(e^{\frac{t}{2}}a)}(x)e^{\frac{t}{2}J}.
\end{equation}

\subsection{Deformed Fourier algebras}
As a consequence the results in the previous sections, we describe the deformed Fourier algebras $\mathcal{F}_L(P;t)$, $\mathcal{F}_R(P;t)$.
\begin{theorem}
\label{eq:diagrama}
The following diagram is commutative
$$
\begin{tikzcd}
	\mathcal{F}_L(P^{(e^{\frac{t}{2}}a)}) \arrow[r, "\varphi"] \arrow[d, "\tau"]
	& \mathcal{F}_R(P^{(e^{\frac{t}{2}}a)}) \arrow[d, "\sigma"] \\
	\mathcal{F}_L(P;t)\arrow[r, "\varphi_t"] 
	& \mathcal{F}_R(P;t)
\end{tikzcd}
$$
where the maps
$\sigma$ and $\tau$
are defined by $$\sigma(E)=e^{-\frac{t}{2}J}Ee^{\frac{t}{2}J},\qquad \tau(M)=e^{-\frac{t}{2}J}Me^{\frac{t}{2}J}$$
and are algebra isomorphisms.
\end{theorem}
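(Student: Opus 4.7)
The essential ingredient is the identity \eqref{eq:Pnt-in-term-Pn}, which says that the deformed monic polynomials are conjugates of the undeformed ones at shifted parameters, namely $P_n(x;t) = e^{-\frac{t}{2}J}P_n^{(e^{\frac{t}{2}}a)}(x)e^{\frac{t}{2}J}$. The plan is to first verify that $\sigma$ and $\tau$ are well-defined algebra isomorphisms on $\mathcal{M}_N$ and $\mathcal{N}_N$, and then use the identity above to verify commutativity on an arbitrary pair $(M,D)$ with $\varphi(M)=D$.

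For the first step I will use the fact that $e^{\pm\frac{t}{2}J}$ is a constant invertible matrix (independent of both $x$ and $n$), so conjugation by it commutes with $\partial_x$ and with $\delta^j$. Consequently, it acts coefficientwise: on a differential operator $D=\sum_i\partial_x^i F_i(x)$ by $\sigma(D)=\sum_i\partial_x^i\bigl(e^{-\frac{t}{2}J}F_i(x)e^{\frac{t}{2}J}\bigr)$, and on a difference operator $M=\sum_j G_j(n)\delta^j$ by $\tau(M)=\sum_j\bigl(e^{-\frac{t}{2}J}G_j(n)e^{\frac{t}{2}J}\bigr)\delta^j$. Since the Leibniz rule for composition in $\mathcal{M}_N$ and the shift rule $G(n)\delta^j\circ H(n)\delta^\ell = G(n)H(n+j)\delta^{j+\ell}$ in $\mathcal{N}_N$ are both preserved by coefficientwise conjugation, $\sigma$ and $\tau$ are algebra homomorphisms; their inverses are given by conjugating in the opposite direction.

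For commutativity, I fix $M\in\mathcal{F}_L(P^{(e^{\frac{t}{2}}a)})$ with $\varphi(M)=D$, so that $M\cdot P_n^{(e^{\frac{t}{2}}a)}(x)=P_n^{(e^{\frac{t}{2}}a)}(x)\cdot D$. Applying $\tau(M)$ to $P_n(x;t)$, the inner factors $e^{\frac{t}{2}J}e^{-\frac{t}{2}J}$ cancel and \eqref{eq:Pnt-in-term-Pn} gives
$$
\tau(M)\cdot P_n(x;t)
= e^{-\frac{t}{2}J}\bigl(M\cdot P_n^{(e^{\frac{t}{2}}a)}(x)\bigr)e^{\frac{t}{2}J}
= e^{-\frac{t}{2}J}\bigl(P_n^{(e^{\frac{t}{2}}a)}(x)\cdot D\bigr)e^{\frac{t}{2}J}.
$$
Expanding the right action $P_n^{(e^{\frac{t}{2}}a)}(x)\cdot D=\sum_i(\partial_x^i P_n^{(e^{\frac{t}{2}}a)})F_i(x)$, inserting $e^{\frac{t}{2}J}e^{-\frac{t}{2}J}=I$ between the derivative and $F_i(x)$, and pulling the constant matrices through $\partial_x^i$, the expression becomes $\sum_i(\partial_x^i P_n(x;t))\bigl(e^{-\frac{t}{2}J}F_i(x)e^{\frac{t}{2}J}\bigr)=P_n(x;t)\cdot\sigma(D)$. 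This simultaneously shows that $\tau(M)\in\mathcal{F}_L(P;t)$, that $\sigma(D)\in\mathcal{F}_R(P;t)$, and that $\varphi_t(\tau(M))=\sigma(D)$, i.e. $\varphi_t\circ\tau=\sigma\circ\varphi$.

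Running the same computation with $\tau^{-1}$ and $\sigma^{-1}$ shows that $\tau$ and $\sigma$ restrict to bijections between the two pairs of Fourier algebras. The only real book-keeping obstacle is keeping straight the right action convention for $\mathcal{M}_N$ versus the left action for $\mathcal{N}_N$ while pulling the constant matrices $e^{\pm\frac{t}{2}J}$ past derivatives and shifts; because these matrices depend on neither $x$ nor $n$, no commutator terms appear and the argument goes through cleanly.
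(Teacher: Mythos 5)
Your proof is correct and follows essentially the same route as the paper: both rest on the conjugation identity \eqref{eq:Pnt-in-term-Pn}, use it to convert the intertwining relation $M\cdot P_n^{(e^{t/2}a)}=P_n^{(e^{t/2}a)}\cdot E$ into $\tau(M)\cdot P_n(x;t)=P_n(x;t)\cdot\sigma(E)$, and note that $\sigma,\tau$ are algebra homomorphisms with obvious inverses. You simply spell out the coefficientwise action of the conjugation and the cancellation of the inner factors $e^{\frac{t}{2}J}e^{-\frac{t}{2}J}$, which the paper leaves implicit.
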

\begin{proof}
Let $M\in\mathcal{F}_L(P^{(e^{\frac{t}{2}}a)})$ and $E=\varphi(M)$ i.e., $P^{(e^{\frac{t}{2}}a)}_n(x)\cdot E= M\cdot P^{(e^{\frac{t}{2}}a)}_n(x).$ By \eqref{eq:Pnt-in-term-Pn} we get
$$P_n(x;t)\cdot e^{-\frac{t}{2}J}Ee^{\frac{t}{2}J}= e^{-\frac{t}{2}J}Me^{\frac{t}{2}J}\cdot P_n(x;t),$$
then $\varphi_t(\tau(M))=\sigma(\varphi(M)),$ and the diagram is commutative. The maps $\sigma$ and $\tau$ are algebra homomorphism, and their inverses are
$\sigma^{-1}(D)=e^{\frac{t}{2}J}De^{-\frac{t}{2}J}$ and $\tau^{-1}(M)=e^{\frac{t}{2}J}Me^{-\frac{t}{2}J}.$ This completes the proof of the theorem.
\end{proof}
\begin{corollary}
The deformed recurrence coefficients are given by
$$B(n;t) = e^{-\frac{t}{2}J}B^{(e^\frac{t}{2}a)}(n) e^{\frac{t}{2}J},\qquad  C(n;t) = e^{-\frac{t}{2}J}C^{(e^\frac{t}{2}a)}(n) e^{\frac{t}{2}J},$$
where $B^{(e^\frac{t}{2}a)}(n)$ and $C^{(e^\frac{t}{2}a)}(n)$ are the recurrence coefficients corresponding to $W^{(e^{\frac{t}{2}}a)}(x)$.
\end{corollary}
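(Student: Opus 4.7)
The plan is to exploit the commutative diagram of Theorem \ref{eq:diagrama} applied to the three-term recurrence operator itself. First, I would identify the element
$$\mathcal{L}^{(e^{\frac{t}{2}}a)} = \delta + B^{(e^{\frac{t}{2}}a)}(n)\delta^0 + C^{(e^{\frac{t}{2}}a)}(n)\delta^{-1} \in \mathcal{F}_L(P^{(e^{\frac{t}{2}}a)})$$
which, by the recurrence relation for the polynomials $P_n^{(e^{\frac{t}{2}}a)}(x)$, is precisely the preimage of multiplication by $x$ under $\varphi$; that is, $\varphi(\mathcal{L}^{(e^{\frac{t}{2}}a)})=x$.

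Next, I would apply the map $\tau$ to $\mathcal{L}^{(e^{\frac{t}{2}}a)}$. The key observation is that the shift operator $\delta^j$ commutes with any constant matrix factor, so conjugation by $e^{\pm\frac{t}{2}J}$ passes through $\delta^j$ and acts coefficient by coefficient:
$$\tau(\mathcal{L}^{(e^{\frac{t}{2}}a)}) = \delta + e^{-\frac{t}{2}J}B^{(e^{\frac{t}{2}}a)}(n)e^{\frac{t}{2}J}\delta^0 + e^{-\frac{t}{2}J}C^{(e^{\frac{t}{2}}a)}(n)e^{\frac{t}{2}J}\delta^{-1}.$$
By commutativity of the diagram in Theorem \ref{eq:diagrama}, the right-Fourier image of this operator is
$$\varphi_t(\tau(\mathcal{L}^{(e^{\frac{t}{2}}a)})) = \sigma(\varphi(\mathcal{L}^{(e^{\frac{t}{2}}a)})) = \sigma(x) = e^{-\frac{t}{2}J}\, x\, e^{\frac{t}{2}J} = x,$$
using that the scalar variable $x$ commutes with the constant matrix $e^{\frac{t}{2}J}$. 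Hence $\tau(\mathcal{L}^{(e^{\frac{t}{2}}a)})$ is the three-term recurrence operator for the family $(P_n(\cdot;t))_n$, and comparing with $\mathcal{L}(t)=\delta+B(n;t)\delta^0+C(n;t)\delta^{-1}$ (using uniqueness of the recurrence coefficients for monic MVOPs with respect to $W(x;t)$) yields the claimed formulas.

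There is no substantial obstacle; the only point that needs care is justifying that $\tau$ acts as coefficient-wise conjugation, which reduces to the commutation of shifts with constant matrix factors. Equivalently, one could bypass the diagram and substitute \eqref{eq:Pnt-in-term-Pn} directly into the recurrence for $P_n^{(e^{\frac{t}{2}}a)}(x)$, then sandwich between $e^{-\frac{t}{2}J}$ and $e^{\frac{t}{2}J}$ and identify the result with the recurrence for $P_n(x;t)$; the diagrammatic proof is just a conceptual repackaging of this computation.
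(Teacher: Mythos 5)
Your proposal is correct and matches the paper's (implicit) argument: the corollary is stated as a direct consequence of Theorem \ref{eq:diagrama}, obtained exactly by applying $\tau$ to the recurrence operator of $P_n^{(e^{t/2}a)}$ and using that $\varphi_t\circ\tau=\sigma\circ\varphi$ with $\sigma(x)=x$, which is the same as sandwiching the recurrence relation between $e^{-\frac{t}{2}J}$ and $e^{\frac{t}{2}J}$ via \eqref{eq:Pnt-in-term-Pn}. Both the diagrammatic route and the direct substitution you mention are sound, and the coefficient-wise action of $\tau$ is justified precisely as you say, by the commutation of the shifts $\delta^j$ with constant matrices.
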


The next proposition shows that the maps $\tau$ and $\sigma$ preserve the $\dagger$-operations.
\begin{prop}
\label{prop:star-invariance}
Let $E\in\mathcal{F}_R(P^{(e^\frac{t}{2}a)})$ and let $E^\dagger$ be its adjoint. Let $M=\varphi^{-1}(E).$  Then 
$$\sigma(E^\dagger)=(\sigma(E))^\dagger,\qquad \tau(M^\dagger)=(\tau(M))^\dagger.$$
\end{prop}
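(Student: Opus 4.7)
The plan is to use the factorisation $W(x;t)=e^{-\frac{t}{2}J}\,W^{(e^{t/2}a)}(x)\,e^{-\frac{t}{2}J}$ obtained in \eqref{eq:Wxt-W}, together with the fact that $J$ is real diagonal, so that $e^{-\frac{t}{2}J}$ is Hermitian and commutes with its conjugate. This will let me translate the deformed inner product $\langle\cdot,\cdot\rangle_t$ into the inner product associated to $W^{(e^{t/2}a)}$ after conjugating each polynomial argument on the right by $e^{-\frac{t}{2}J}$.

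More precisely, for any matrix polynomials $P,Q$, set $\tilde P(x)=P(x)\,e^{-\frac{t}{2}J}$ and $\tilde Q(x)=Q(x)\,e^{-\frac{t}{2}J}$. Since $(e^{-\frac{t}{2}J})^\ast=e^{-\frac{t}{2}J}$, a direct substitution gives
\[
\langle P,Q\rangle_t = \int P(x)\,e^{-\frac{t}{2}J}W^{(e^{t/2}a)}(x)e^{-\frac{t}{2}J}\,Q(x)^\ast d\mu(x) = \langle \tilde P,\tilde Q\rangle^{(e^{t/2}a)}.
\]
Next I observe that the action of $\sigma(E)=e^{-\frac{t}{2}J} E\, e^{\frac{t}{2}J}$, interpreted as composition in $\mathcal{M}_N$, satisfies $P\cdot\sigma(E)=(\tilde P\cdot E)\,e^{\frac{t}{2}J}$, because the constant matrices $e^{\pm\frac{t}{2}J}$ pass through each differential coefficient of $E$ unchanged. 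Analogously, $Q\cdot\sigma(E^\dagger)=(\tilde Q\cdot E^\dagger)\,e^{\frac{t}{2}J}$.

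With these two identities in hand, the verification is essentially a one-line calculation: using the adjoint property of $E$ inside the undeformed inner product (which is available because $E\in\mathcal{F}_R(P^{(e^{t/2}a)})$),
\[
\langle P\cdot\sigma(E),Q\rangle_t = \langle \tilde P\cdot E,\tilde Q\rangle^{(e^{t/2}a)} = \langle \tilde P,\tilde Q\cdot E^\dagger\rangle^{(e^{t/2}a)} = \langle P,Q\cdot\sigma(E^\dagger)\rangle_t,
\]
which establishes $\sigma(E)^\dagger=\sigma(E^\dagger)$. The statement for $\tau$ follows by an entirely parallel argument on the difference-operator side: using $P_n(x;t)=e^{-\frac{t}{2}J}P_n^{(e^{t/2}a)}(x)e^{\frac{t}{2}J}$ from \eqref{eq:Pnt-in-term-Pn}, one checks that $\tau(M)\cdot P_n(x;t)=e^{-\frac{t}{2}J}\,(M\cdot P_n^{(e^{t/2}a)}(x))\,e^{\frac{t}{2}J}$, and then the same inner-product manipulation transfers $M^\dagger$ through $\tau$.

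I expect no real obstacle here; the only point that requires care is the bookkeeping of where the constant factor $e^{\pm\frac{t}{2}J}$ lives when operators in $\mathcal{M}_N$ or $\mathcal{N}_N$ act on a polynomial, and making sure the right-action conventions of \eqref{eq:DifferentialOperator} and the left-action conventions of \eqref{eq:DifferenceOperator} are respected. Once those conventions are spelled out the identities above are immediate, since the Hermiticity of $e^{-\frac{t}{2}J}$ removes any nontrivial conjugation effect.
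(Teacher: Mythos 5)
Your proof is correct and follows essentially the same route as the paper: both arguments rest on the factorization \eqref{eq:Wxt-W} together with \eqref{eq:Pnt-in-term-Pn} and the Hermiticity of $e^{-\frac{t}{2}J}$, transferring the deformed inner product $\langle\cdot,\cdot\rangle_t$ to the undeformed one, applying the adjoint relation there, and transferring back. The only cosmetic difference is that you verify the identity for arbitrary matrix polynomials via the substitution $\tilde P = P\,e^{-\frac{t}{2}J}$, whereas the paper checks it directly on the orthogonal polynomials $P_n(x;t)$ and $P_m(x;t)$, which suffices by left-linearity.
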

\begin{proof}
The proof is a consequence of the factorization of the weight \eqref{eq:Wxt-W} and the expression of the monic orthogonal polynomials \eqref{eq:Pnt-in-term-Pn}, for
\begin{align*}
\langle P_n(x;t)\cdot\sigma(E),P_m(x;t)\rangle_t=e^{-\frac{t}{2}J}\langle P_n^{(e^\frac{t}{2}a)}\cdot E, P_m^{(e^\frac{t}{2}a)}\rangle e^{-\frac{t}{2}J}&=e^{-\frac{t}{2}J}\langle P_n^{(e^\frac{t}{2}a)}, P_m^{(e^\frac{t}{2}a)}\cdot E^\dagger\rangle e^{-\frac{t}{2}J}\\
&=\langle P_n(x;t),P_m(x;t)\cdot\sigma(E^\dagger)\rangle_t.   
\end{align*}
Similarly we get
\begin{align*}
\langle \tau(M)\cdot P_n(x;t)
,P_m(x;t)\rangle_t=e^{-\frac{t}{2}J}\langle M\cdot P_n^{(e^\frac{t}{2}a)}, P_m^{(e^\frac{t}{2}a)}\rangle e^{-\frac{t}{2}J}&=e^{-\frac{t}{2}J}\langle P_n^{(e^\frac{t}{2}a)}, M^\dagger \cdot P_m^{(e^\frac{t}{2}a)}\rangle e^{-\frac{t}{2}J}\\
&=\langle P_n(x;t),\tau(M^\dagger)\cdot P_m(x;t)\rangle_t.
\end{align*}
This completes the proof of the proposition
\end{proof}
\begin{corollary}
The following hold
$$\sigma(D^{(e^\frac{t}{2}a)})=D^{(a)},\qquad \sigma((D^{(e^\frac{t}{2}a)})^\dagger)=(D^{(a)})^\dagger, \qquad \sigma(\mathcal{D}^{(e^\frac{t}{2}a)})=\mathcal{D}^{(a)}, \qquad \sigma(\mathcal{C}^{(e^\frac{t}{2}a)})=\mathcal{C}^{(a)}.$$
The operators $D^{(a)}$, $(D^{(a)})^\dagger$, $\mathcal{D}^{(a)}$ and $\mathcal{C}^{(a)}$ belong to $\mathcal{F}_R(P;t)$ for all $t\geq0$. Moreover, $\mathcal{D}^{(a)}$ has the monic polynomials $P_n(x;t)$ as eigenfunctions.
\end{corollary}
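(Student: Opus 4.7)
My plan is to reduce the whole statement to a single matrix calculation, namely the action of $\sigma$ on the subdiagonal matrix $A^{(a)}$. By Theorem \ref{eq:diagrama} the map $\sigma$ is an algebra isomorphism, and since it is conjugation by the constant diagonal matrix $e^{-\frac{t}{2}J}$, it acts on the matrix coefficients of any differential operator in $\mathcal{F}_R(P^{(e^{t/2}a)})$ by conjugation with $e^{\frac{t}{2}J}$. In particular $\sigma$ fixes multiplication by $x$, the derivative $\partial_x$, and the matrix $J$ itself (since $J$ commutes with $e^{\frac{t}{2}J}$). Consequently, once we know $\sigma(A^{(e^{t/2}a)}) = A^{(a)}$, each of the four identities will follow by applying $\sigma$ term by term to the defining formulas for $D^{(e^{t/2}a)}$, $(D^{(e^{t/2}a)})^\dagger$, $\mathcal{D}^{(e^{t/2}a)}$, and $\mathcal{C}^{(e^{t/2}a)}$.

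The key matrix computation is short: since $(A^{(e^{t/2}a)})_{j,j-1} = e^{t/2}a_{j-1}$ and $J = \mathrm{diag}(1,\ldots,N)$, the only nonzero entries of $e^{-\frac{t}{2}J} A^{(e^{t/2}a)} e^{\frac{t}{2}J}$ lie on the subdiagonal and equal
\[
e^{-tj/2}\bigl(e^{t/2}a_{j-1}\bigr)e^{t(j-1)/2}=a_{j-1},
\]
so $\sigma(A^{(e^{t/2}a)}) = A^{(a)}$. Writing $a'=e^{t/2}a$ for brevity and substituting into $D^{(a')} = \partial_x + A^{(a')}$, $(D^{(a')})^\dagger = -D^{(a')} + 2x$, $\mathcal{D}^{(a')} = \partial_x^2 + \partial_x(2A^{(a')}-2x) + (A^{(a')})^2 - 2J$, and $\mathcal{C}^{(a')} = J - xA^{(a')}$, the four desired identities drop out immediately from $\sigma(x)=x$, $\sigma(\partial_x)=\partial_x$, and $\sigma(J)=J$.

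With the four identities in hand, the membership $D^{(a)},(D^{(a)})^\dagger,\mathcal{D}^{(a)},\mathcal{C}^{(a)}\in\mathcal{F}_R(P;t)$ is immediate from Theorem \ref{eq:diagrama}, since each of these operators is the $\sigma$-image of its analogue in $\mathcal{F}_R(P^{(e^{t/2}a)})$ and $\sigma$ is a bijection onto $\mathcal{F}_R(P;t)$.

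For the eigenfunction claim I would use the known Bochner-type relation $P_n^{(e^{t/2}a)}(x)\cdot \mathcal{D}^{(e^{t/2}a)} = \Lambda_n P_n^{(e^{t/2}a)}(x)$ satisfied by the undeformed Hermite family, and conjugate. The intertwining identity
\[
(e^{-\frac{t}{2}J} P e^{\frac{t}{2}J}) \cdot \sigma(E) = e^{-\frac{t}{2}J}(P\cdot E) e^{\frac{t}{2}J},
\]
which is implicit in the proof of Theorem \ref{eq:diagrama} and follows because $e^{\pm\frac{t}{2}J}$ is constant in $x$, combined with \eqref{eq:Pnt-in-term-Pn}, yields
\[
P_n(x;t)\cdot \mathcal{D}^{(a)} = \bigl(e^{-\frac{t}{2}J}\Lambda_n e^{\frac{t}{2}J}\bigr) P_n(x;t),
\]
so $P_n(x;t)$ is an eigenfunction of $\mathcal{D}^{(a)}$ with eigenvalue matrix $e^{-\frac{t}{2}J}\Lambda_n e^{\frac{t}{2}J}$ (equal to $\Lambda_n$ if $\Lambda_n$ is scalar, as is typical in the Hermite case). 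I do not expect a real obstacle here: all the structural input has already been packaged in Theorem \ref{eq:diagrama}, and the only concrete calculation is the bidiagonal conjugation displayed above.
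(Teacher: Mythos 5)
Your proposal is correct and follows the same route the paper intends: everything is funneled through the isomorphism $\sigma$ of Theorem \ref{eq:diagrama}, which acts on coefficients by conjugation with $e^{\pm\frac{t}{2}J}$, and the key subdiagonal computation $e^{-\frac{t}{2}J}A^{(e^{t/2}a)}e^{\frac{t}{2}J}=A^{(a)}$ together with $\sigma(x)=x$, $\sigma(\partial_x)=\partial_x$, $\sigma(J)=J$ gives all four identities, while membership in $\mathcal{F}_R(P;t)$ and the eigenfunction property follow by transporting the corresponding facts for $P_n^{(e^{t/2}a)}$ through \eqref{eq:Pnt-in-term-Pn}. The paper's own proof is only the one-line pointer ``follows from the previous proposition,'' so your write-up simply makes explicit the conjugation calculation that the authors leave to the reader; there is no gap.
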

\begin{proof}
The proof follows from the previous proposition.
\end{proof}
One of the consequences of Theorem \ref{eq:diagrama} and Proposition \ref{prop:star-invariance} is that,  despite time evolution, $D^{(a)}$ and $(D^{(a)})^\dagger$ are each other's adjoints and $\mathcal{D}^{(a)}$ is a symmetric differential operator. Moreover
the harmonic oscillator Lie algebra  $\mathfrak{g}$ is contained in $\mathcal{F}_R(P;t)$ for all $t\geq 0$. 

Finally, we observe that the Casimir operator $\mathcal{C}^{(a)}$ acts on the deformed polynomials $P_n(x,t)$ by a three-term difference operator given by
\begin{equation}
\label{eq:ActionCasimirHermite}
G_1(n;t) P_{n+1}(x;t) + G_0(n;t) P_n(x;t) + G_{-1}(n;t) P_{n-1}(x;t) = P_n(x;t)\cdot \mathcal{C}^{(a)}(x).
\end{equation}
From \cite[Lemma 4]{DER} we obtain the following expression for the coefficients $G_1$, $G_0$, $G_{-1}$:
$$G_1(n;t) = -A, \quad G_0(n;t) = n+J-2C(n;t)-AB(n;t), \quad G_{-1}(n;t) = C(n;t)A-2C(n;t)B(n-1;t).$$
By Theorem \ref{thm:toda-type-eq}, the coefficients $G_0$ and $G_{-1}$ satisfy the Toda type equations. Also, note that the coefficient $G_1$ is independent of $n$ and $t$, consistently with Lemma \ref{lem:3}.

\subsection{Example: the deformed $2\times 2$ weight}
If we let $N=2$ in \eqref{eq:Wxt-tinJ}, with $A$ given by \eqref{eq:ourweight} and $a_1=a$, the deformed weight matrix $W$ is the $2\times 2$ matrix
$$W(x;t) = 
e^{-x^2-t}
\begin{pmatrix}
1 & xa\\
xa& x^2a^2+e^{-t}
\end{pmatrix}  $$
The monic orthogonal polynomials $P_n(x;t)$ can be written as a matrix linear combination of scalar Hermite polynomial as
$$2^n P_{n}(x;t)= H_{n}(x) - n a\begin{pmatrix}
0 & \frac{2}{na^2+2e^t} \\
1 & 0 
\end{pmatrix}H_{n-1}(x) + n(n-1)\begin{pmatrix}
\frac{2a^2}{na^2+2e^t} & 0\\
0 & 0 
\end{pmatrix} H_{n-2}(x).$$
The case $t=0$ is given in \cite[Theorem 5.1]{DG2}, up to a conjugation of the matrix $A$.

In the $2\times2$ case, the coefficients $G_1$, $G_0$ and $G_{-1}$ associated to the Casimir operator in \eqref{eq:ActionCasimirHermite} have the simple expressions 
$$
G_1(n;t)=-A,\qquad G_0(n;t) = \begin{pmatrix}
2\frac{na^2+e^t}{na^2+2e^t} & 0\\
0 & \frac{(n+1)a^2+4e^t}{(n+1)a^2+2e^t}
\end{pmatrix}, \qquad G_{-1}(n;t) = \begin{pmatrix}
 0 & -\frac{2nae^t}{(na^2+2e^t)^2}\\
0 & 0
\end{pmatrix}.
$$
The Toda type equations in Theorem \ref{thm:toda-type-eq} can be directly verified from these explicit expressions. 

\section{Acknowledgements}
The authors are grateful to the Radboud Summer School Orthogonal Polynomials, Special Functions and their Applications 2022, where this project started. The authors would also like to thank Erik Koelink for useful discussions. 

The work of Luc\'ia Morey and Pablo Rom\'an was supported by SeCyTUNC.

Alfredo Dea\~{n}o acknowledges financial support from Universidad Carlos III de Madrid (I Convocatoria para la Recualificaci\'on del Profesorado Universitario), from Direcci\'on General de Investigaci\'on e Innovaci\'on, Consejer\'ia de Educaci\'on e Investigaci\'on of Comunidad de Madrid (Spain), and Universidad de Alcal\'a under grant CM/JIN/2021-014.
Research supported by Grant PID2021-123969NB-I00, funded by MCIN/AEI/ 10.13039/501100011033, and by grant PID2021-122154NB-I00 from Spanish Agencia Estatal de Investigaci\'on.


\begin{thebibliography}{10}

\bibitem{AvM99}
M. Adler and P. van Moerbeke.
\newblock Generalized orthogonal polynomials, discrete {KP} and
  {R}iemann-{H}ilbert problems.
\newblock {\em Comm. Math. Phys.}, 207(3):589--620, 1999.

\bibitem{Arniz-2014}
G. Ariznabarreta and M. Ma\~nas.
\newblock Matrix orthogonal {L}aurent polynomials on the unit circle and {T}oda
  type integrable systems.
\newblock {\em Adv. Math.}, 264:396--463, 2014.

\bibitem{Berg}
C. Berg.
\newblock The matrix moment problem.
\newblock {\em Coimbra Lecture Notes on Orthogonal Polynomials.} Nova Publishers, New York, 2008.

\bibitem{BFGA_2020}
A. Branquinho, A. Foulqui\'e-Moreno, and J. C. Garc\'ia-Ardila.
\newblock Matrix {T}oda and {V}olterra lattices.
\newblock {\em Applied Mathematics and Computation}, 365:124722, 2020.

\bibitem{BFM_JMAA}
A. Branquinho, A. Foulqui\'{e}~Moreno, and M. Ma\~{n}as.
\newblock Matrix biorthogonal polynomials: eigenvalue problems and
  non-{A}belian discrete {P}ainlev\'{e} equations: a {R}iemann-{H}ilbert
  problem perspective.
\newblock {\em J. Math. Anal. Appl.}, 494(2):Paper No. 124605, 36, 2021.

\bibitem{Bruschi}
M. Bruschi, S. V. Manakov, O.~Ragnisco, and D.~Levi.
\newblock The nonabelian {Toda} lattice - {Discrete} analogue of the matrix
  {Schr{\"o}dinger} spectral problem.
\newblock {\em J. Math. Phys.}, 21:2749--2753, 1980.

\bibitem{CafassoI}
M. Cafasso and M.~D. de la Iglesia.
\newblock Non-commutative {P}ainlev\'{e} equations and {H}ermite-type matrix
  orthogonal polynomials.
\newblock {\em Comm. Math. Phys.}, 326(2):559--583, 2014.

\bibitem{CafassoII}
M. Cafasso and M.~D. de la Iglesia.
\newblock The {T}oda and {P}ainlev\'{e} systems associated with semiclassical
  matrix-valued orthogonal polynomials of {L}aguerre type.
\newblock {\em SIGMA Symmetry Integrability Geom. Methods Appl.}, 14(076):17
  pages, 2018.

\bibitem{Casper2}
W. R. Casper and M. Yakimov.
\newblock The matrix {Bochner} problem.
\newblock {\em American Journal of Mathematics}, 144(4):1009--1065, 2022.

\bibitem{CCM2019}
G.~A. Cassatella-Contra and M. Ma\~{n}as.
\newblock Riemann-{H}ilbert problem and matrix discrete {P}ainlev\'{e} {II}
  systems.
\newblock {\em Stud. Appl. Math.}, 143(3):272--314, 2019.

\bibitem{Chihara}
T. S. Chihara.
\newblock {\em An introduction to orthogonal polynomials}.
\newblock Mathematics and its Applications, Vol. 13. Gordon and Breach Science
  Publishers, New York-London-Paris, 1978.

\bibitem{DPS}
D. Damanik, A. Pushnitski, and B. Simon.
\newblock The analytic theory of matrix orthogonal polynomials.
\newblock {\em Surveys in Approximation Theory (SAT)} 4, 1--85, 2008.

\bibitem{DER}
A. Dea\~no, B. Eijsvoogel, and P. Rom\'an.
\newblock {Ladder relations for a class of matrix valued orthogonal
  polynomials}.
\newblock {\em {Stud. Appl. Math.}}, 146(2):463--497, 2021.

\bibitem{DG2}
A.~J. Dur{\'a}n and F.~A. Gr{\"u}nbaum.
\newblock Structural formulas for orthogonal matrix polynomials satisfying
  second-order differential equations. {I}.
\newblock {\em Constr. Approx.}, 22(2):255--271, 2005.

\bibitem{EMR}
B. Eijsvoogel, L. Morey, and P. Román.
\newblock Duality and difference operators for matrix valued discrete
  polynomials on the nonnegative integers.
  \newblock {\em Constr. Approx.}, 2023.

\bibitem{FlaschkaToda}
H. Flaschka.
\newblock The {T}oda lattice. {II}. {E}xistence of integrals.
\newblock {\em Phys. Rev. B}, 9:1924--1925, Feb 1974.

\bibitem{GeK}
M.~Gekhtman.
\newblock Hamiltonian structure of non-{Abelian} {Toda} lattice.
\newblock {\em Lett. Math. Phys.}, 46(3):189--205, 1998.

\bibitem{Ismail}
M. E.~H. Ismail.
\newblock {\em Classical and quantum orthogonal polynomials in one variable},
  volume~98 of {\em Encyclopedia of Mathematics and its Applications}.
\newblock Cambridge University Press, Cambridge, 2005.
\newblock With two chapters by Walter Van Assche, With a foreword by Richard A.
  Askey.

\bibitem{IKR2}
M.~E.~H. Ismail, E. Koelink, and P. Rom\'an.
\newblock {Matrix valued Hermite polynomials, Burchnall formulas and
  non-Abelian Toda lattice}.
\newblock {\em {Adv. Appl. Math.}}, 110:235--269, 2019.

\bibitem{IKR}
M.~E.~H. Ismail, E. Koelink, and P. Rom{\'a}n.
\newblock Generalized {B}urchnall-type identities for orthogonal polynomials
  and expansions.
\newblock {\em Symmetry, Integrability and Geometry: Methods and Applications},
  14(072):24 pages, 2018.

\bibitem{KdlRR}
E. Koelink, A.~M. de los R\'{\i}os, and P. Rom{\'a}n.
\newblock Matrix-valued {G}egenbauer-type polynomials.
\newblock {\em Constr. Approx.}, 46(3):459--487, 2017.

\bibitem{KR}
E. Koelink and P. Rom{\'a}n.
\newblock Orthogonal vs. non-orthogonal reducibility of matrix-valued measures.
\newblock {\em SIGMA Symmetry Integrability Geom. Methods Appl.}, 12(008):9
  pages, 2016.

\bibitem{KoelinkRlaguerre}
E. Koelink and P. Rom\'an.
\newblock {Matrix valued Laguerre polynomials}.
\newblock In {\em {Positivity and noncommutative analysis. Festschrift in
  honour of Ben de Pagter on the occasion of his 65th birthday. Based on the
  workshop ``Positivity and Noncommutative Analysis'', Delft, The Netherlands,
  September 26--28, 2018}}, pages 295--320. Cham: Birkh\"auser, 2019.

\bibitem{ManakovToda}
S.~V. Manakov.
\newblock Complete integrability and stochastization of discrete dynamical
  systems.
\newblock {\em Zh. Eksp. Teor. Fiz.}, 67:543--555, Aug 1974.

\bibitem{Miranian}
L. Miranian. 
\newblock Matrix-valued orthogonal polynomials on the real line: some extensions of the classical theory.
\newblock {\em J. Phys. A: Math. Gen.}, 38, 5731--5749, 2005.

\bibitem{TiraoZurrian}
J.~A. Tirao and I.~N. Zurri{\'a}n.
\newblock Spherical functions of fundamental {$K$}-types associated with the
  {$n$}-dimensional sphere.
\newblock {\em SIGMA Symmetry Integrability Geom. Methods Appl.}, 10(071):41
  pages, 2014.

\bibitem{toda1967vibration}
M. Toda.
\newblock Vibration of a chain with nonlinear interaction.
\newblock {\em Journal of the Physical Society of Japan}, 22(2):431--436, 1967.

\bibitem{toda1967wave}
M. Toda.
\newblock Wave propagation in anharmonic lattices.
\newblock {\em Journal of the Physical Society of Japan}, 23(3):501--506, 1967.

\bibitem{Walter_book}
W. Van Assche.
\newblock {\em Orthogonal polynomials and {P}ainlev\'{e} equations}, volume~27
  of {\em Australian Mathematical Society Lecture Series}.
\newblock Cambridge University Press, Cambridge, 2018.

\bibitem{WalterOPsToda}
W. {Van Assche}.
\newblock Orthogonal polynomials, {T}oda lattices and {P}ainlevé equations.
\newblock {\em Physica D: Nonlinear Phenomena}, 434:133214, 2022.

\end{thebibliography}
\end{document}